\newtheorem{thm}{Theorem}[section]
\newtheorem{cor}[thm]{Corollary}
\newtheorem{prop}[thm]{Proposition}
\newtheorem{conj}[thm]{Conjecture}
\theoremstyle{definition}
\newtheorem{defn}[thm]{Definition}
\theoremstyle{remark}
\newtheorem{rem}[thm]{Remark}
\let\c@equation\c@thm
\numberwithin{equation}{section}
\newcommand{\pardeg}{\mathrm{pardeg}}
\title{Poisson Structures on Moduli Spaces of Higgs Bundles over Stacky Curves}
\author{Georgios Kydonakis, Hao Sun and Lutian Zhao}
\begin{document}

\begin{abstract}
We demonstrate the construction of Poisson structures via Lie algebroids on moduli spaces of twisted stable Higgs bundles over stacky curves.  The construction provides new examples of Poisson structures on such moduli spaces. Special attention is paid at moduli spaces of parabolic Higgs bundles over a root stack.
\end{abstract}
\maketitle

\renewcommand{\thefootnote}{\fnsymbol{footnote}}
\footnotetext[1]{Keywords: Poisson structure, Lie algebroid, stacky curve, moduli space}
\footnotetext[2]{2020 Mathematics Subject Classification: 14A20, 14D20, 53D17}

\section{Introduction}

Symplectic or Poisson structures on moduli spaces over a complex algebraic curve $X$ have been obtained in a variety of cases. A basic paradigm involves the $G$-character varieties, as moduli spaces of fundamental group representations into a connected complex reductive group $G$. These are finite-dimensional complex symplectic manifolds when $X$ is compact and the symplectic structure in this case was first conceived analytically using the method of symplectic reduction from infinite dimensional spaces in the seminal work of Atiyah and Bott \cite{AtBo}. At the same time, a more topological approach to the natural symplectic structure on such spaces of fundamental group representations was proposed by Goldman \cite{Go} interpreting these structures in terms of the intersection pairing on the underlying topological surface. The method of symplectic reduction was next further developed by Hitchin in \cite{Hit87} to produce K\"{a}hler and hyperk\"{a}hler structures on the moduli space of stable Higgs bundles via the non-abelian Hodge correspondence and their counterparts to moduli spaces of solutions to the self-duality equations.

In the case when the curve is noncompact, the symplectic structure generalizes to a Poisson structure. The noncompact case is actually equivalent to equipping $X$ with a reduced effective divisor $D$. The symplectic leaves consist of equivalence classes of connections with fixed conjugacy class of local holonomy around each point of a fixed reduced effective divisor on $X$. A primary description of this theory appeared in the book of Atiyah \cite{Atbook}, while in the article of Audin \cite{Audin} a review of several approaches is presented. In the particular situation of parabolic vector bundles on $X$ with trivial flags in the parabolic structure, Poisson structures on moduli spaces of twisted Higgs bundles were obtained independently by Bottacin \cite{Bot} and Markman \cite{Mar}. 

Note that from the point of view of the tame nonabelian Hodge correspondence as described by Simpson \cite{Simp-noncompact}, parabolic Higgs bundles correspond to filtered local systems which are regarded as representations of the fundamental group $\pi_1(X\backslash D)$. In the wild case, a nonabelian Hodge correspondence when the structure group is $\text{GL}(n, \mathbb{C})$ was obtained collectively from the works of Biquard-Boalch \cite{BiBo} and Sabbah \cite{Sa}. For a general connected complex reductive group $G$, Boalch introduced wild character varieties to classify meromorphic $G$-connections with higher order poles, and constructed Poisson structures on them (see for instance \cite{Bo3,Bo4}, and the survey article \cite{BoSurvey}).

In this article, we exhibit a wider class of Poisson structures on moduli spaces of Higgs bundles over stacky curves, demonstrating an intrinsic property that justifies the existence of the Poisson structure. The principal method by which we shall obtain Poisson structures on the moduli spaces of interest is via the duals of Lie algebroids. 

In \cite{LoMa}, Logares and Martens considered moduli spaces $\mathsf{\mathcal{P}}_{\alpha}$ of $\alpha$-semistable parabolic Higgs bundles over an algebraic curve. In fact, the open subset $\mathsf{\mathcal{P}}_{\alpha }^{0}\subset {{\mathsf{\mathcal{P}}}_{\alpha }}$ of pairs involving a stable underlying parabolic bundle is a vector bundle over the moduli space  ${{\mathsf{\mathcal{N}}}_{\alpha }}$ of stable parabolic bundles. Logares and Martens showed that the dual of this vector bundle is an Atiyah algebroid associated to a principal bundle over the space ${{\mathsf{\mathcal{N}}}_{\alpha }}$, thus admitting a Poisson structure. On the other hand, there exists a bi-vector field on $\mathcal{P}_{\alpha}$ which agrees with the Poisson bracket on $\mathcal{P}^{0}_{\alpha}$, thus establishing the Poisson structure on the entire  $\mathcal{P}_{\alpha}$. It is important here the fact that the Atiyah sequence of the algebroid naturally follows from the deformation theory of parabolic vector bundles and a Serre duality map in hypercohomology plays a pivotal role in the definition of the Poisson bracket. These ideas provided important motivation for the development of the present work.

Let $\mathcal{X}$ be a smooth projective Deligne-Mumford stack over $\mathbb{C}$ and let $X$ be the coarse moduli space of $\mathcal{X}$. The moduli space of semistable $G$-Higgs bundles on $\mathcal{X}$ was constructed by Simpson in \cite{Simp2010}, who showed this is, in fact, a quasi-projective scheme. The moduli problem and also moduli space of Higgs bundles on Deligne-Mumford stacks has been more generally studied in \cite{Sun191} and \cite{Sun201} by the second author. 

Suppose that $\mathcal{X}$ is a stacky curve, which is a smooth projective Deligne-Mumford stack of dimension one. We are thus considering the moduli space $\mathcal{M}_H(\mathcal{X},\alpha)$  of stable Higgs bundles over $\mathcal{X}$ with a fixed parabolic structure $\alpha$, as well as  $\mathcal{M}_H(\mathcal{X},G)$ and $\mathcal{M}_H(\mathcal{X},G,\alpha)$, the relative moduli spaces of pairs where the structure group of the underlying bundles is determined by a complex reductive algebraic group $G$. We show that the moduli space $\mathcal{M}_H(\mathcal{X},\alpha)$ is a Lie algebroid over the tangent space of the moduli space of stable bundles over $X$, thus implying the main theorem of this article:

\begin{thm}[Theorem \ref{404}]
	Let $\mathcal{X}$ be a stacky curve over $\mathbb{C}$. The moduli space $\mathcal{M}_H(\mathcal{X},\alpha)$ of stable Higgs bundles over $\mathcal{X}$ with fixed parabolic structure $\alpha$ admits a Poisson structure.
\end{thm}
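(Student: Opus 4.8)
The plan is to carry over the Lie-algebroid strategy of Logares--Martens \cite{LoMa} to the stacky setting. The underlying principle is that for any Lie algebroid $(\mathcal{A},[\,\cdot\,,\,\cdot\,],\rho)$ over a smooth base $\mathcal{N}$ the total space of the dual bundle $\mathcal{A}^{\vee}$ carries a canonical Poisson structure, linear along the fibres; so it is enough to realise a big open subset of $\mathcal{M}_H(\mathcal{X},\alpha)$ as such a dual and then extend. First I would single out the open locus $\mathcal{M}_H^{0}\subset\mathcal{M}_H(\mathcal{X},\alpha)$ of stable Higgs bundles whose underlying parabolic bundle is already stable. Forgetting the Higgs field gives a morphism $p\colon\mathcal{M}_H^{0}\to\mathcal{N}$ to the moduli space $\mathcal{N}$ of stable parabolic bundles over $\mathcal{X}$, and, invoking the deformation theory of (parabolic) bundles over Deligne--Mumford stacks from \cite{Sun191,Sun201} together with smoothness of the stable locus, I would check that $p$ is a Zariski-locally trivial vector bundle with fibre $H^{0}\big(\mathcal{X},\mathcal{E}nd_{\mathrm{par}}(E)\otimes\Omega^{1}_{\mathcal{X}}\big)$ over $[E]$.

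Next I would analyse the deformation complex of a point $(E,\phi)\in\mathcal{M}_H^{0}$, whose tangent space is $\mathbb{H}^{1}$ of the two-term complex $\mathcal{E}nd_{\mathrm{par}}(E)\xrightarrow{[\phi,\,\cdot\,]}\mathcal{E}nd_{\mathrm{par}}(E)\otimes\Omega^{1}_{\mathcal{X}}$ on $\mathcal{X}$; Serre duality on the proper one-dimensional stack $\mathcal{X}$, with dualizing sheaf $\omega_{\mathcal{X}}=\Omega^{1}_{\mathcal{X}}$, pairs this $\mathbb{H}^{1}$ with the $\mathbb{H}^{1}$ of the Serre-dual complex, and the discrepancy between the parabolic and the strongly parabolic endomorphism sheaves on $\mathcal{X}$ — dictated by the flags of $\alpha$ — is precisely what makes $p$ a genuine twist of, rather than equal to, the cotangent bundle $T^{*}\mathcal{N}$. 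The outcome I would establish is that the vector bundle $\mathcal{A}:=(\mathcal{M}_H^{0}/\mathcal{N})^{\vee}$ fits into an Atiyah-type exact sequence $0\to\mathrm{ad}(P)\to\mathcal{A}\xrightarrow{\rho}T\mathcal{N}\to0$ and is in fact the Atiyah algebroid of a principal bundle $P$ over $\mathcal{N}$ built from the flag structure of the universal parabolic bundle along the stacky/parabolic locus, exactly as in \cite{LoMa}; by the principle above, $\mathcal{M}_H^{0}\cong\mathrm{Tot}(\mathcal{A}^{\vee})$ then carries a Poisson structure, the Serre-duality pairing supplying the bracket.

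Finally I would propagate the Poisson bivector from $\mathcal{M}_H^{0}$ to all of $\mathcal{M}_H(\mathcal{X},\alpha)$: the latter is smooth, $\mathcal{M}_H^{0}$ is open and dense, and — this is one of the technical points to be pinned down — its complement has codimension at least two, so the bivector, a section of $\wedge^{2}T\mathcal{M}_H(\mathcal{X},\alpha)$ over a big open set, extends uniquely, and the Jacobi identity $[\pi,\pi]=0$ survives as a closed condition valid on a dense set; the relative moduli spaces $\mathcal{M}_H(\mathcal{X},G)$ and $\mathcal{M}_H(\mathcal{X},G,\alpha)$ would follow by running the same construction fibrewise. The step I expect to be the main obstacle is showing that $\mathcal{A}$ is genuinely a \emph{Lie} algebroid — that the bracket coming from the stacky Serre-duality pairing satisfies the Jacobi identity and the Leibniz rule relative to the anchor $\rho$ — since in \cite{LoMa} this is obtained for free once $\mathcal{A}$ is recognised as an honest Atiyah algebroid; the real work here is to construct the analogous principal bundle $P$ over the stack-theoretic moduli $\mathcal{N}$ and to verify that the cotangent complex of $\mathcal{X}$, Serre duality on $\mathcal{X}$, and the coupling of $\Omega^{1}_{\mathcal{X}}$ with the parabolic weights all behave as in the schematic case.
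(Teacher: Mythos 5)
Your overall architecture --- restrict to the open dense locus $\mathcal{M}^0_H$ where the underlying bundle is stable, exhibit it as the total space of the dual of a Lie algebroid sitting in an Atiyah-type sequence, invoke the theorem that duals of Lie algebroids are Poisson, and extend --- is exactly the paper's. The gap is in the identification of the Atiyah sequence: you have transplanted the Logares--Martens sequence into a situation where it degenerates. For $\mathcal{M}_H(\mathcal{X},\alpha)$ the Higgs field is a section of $\mathcal{E}nd(\mathcal{F})\otimes\omega_{\mathcal{X}}$ with no further constraint (on the stack every endomorphism is automatically ``parabolic''; under the root-stack dictionary this moduli space corresponds to \emph{strongly} parabolic Higgs bundles downstairs, cf.\ Proposition \ref{602}(1)). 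Serre duality on the stacky curve therefore gives $H^0(\mathcal{X},\mathcal{E}nd(\mathcal{F})\otimes\omega_{\mathcal{X}})^* \cong H^1(\mathcal{X},\mathcal{E}nd(\mathcal{F})) = T_{\mathcal{F}}\mathcal{N}$ on the nose: your $\mathcal{A}=(\mathcal{M}^0_H/\mathcal{N})^{\vee}$ is literally $T\mathcal{N}$, the bundle $p$ \emph{is} equal to $T^*\mathcal{N}$ (not a genuine twist of it, contrary to what you assert), and your proposed sequence $0\to\mathrm{ad}(P)\to\mathcal{A}\to T\mathcal{N}\to 0$ forces $\mathrm{ad}(P)=0$; there is no nontrivial flag-type principal bundle $P$ over $\mathcal{N}$ to be built here. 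The sequence you have in mind, $0\to\mathcal{SP}ar\mathcal{E}nd(F)\to\mathcal{P}ar\mathcal{E}nd(F)\to\prod_{p}\mathfrak{l}_p\otimes\mathcal{O}_p\to 0$ with Levi kernel, governs the $\omega_{\mathcal{X}}(\mathbb{D})$-twisted (ordinary parabolic) case, i.e.\ Theorem \ref{501} and Corollary \ref{604}, not the present theorem.

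What the paper actually does is compare the stack with its coarse space: setting $\mathcal{F}'=\pi^*\pi_*\mathcal{F}$, it uses $0\to\mathcal{E}nd(\mathcal{F})\to\mathcal{E}nd(\mathcal{F}')\to\prod_{q\in\mathbb{D}}\mathfrak{n}_q\otimes\mathcal{O}_q\to 0$, whose cokernel is supported on $\mathbb{D}$ with fibres the \emph{nilpotent} radicals $\mathfrak{n}_q$, and whose long exact sequence (the $H^1$ of a skyscraper vanishing) yields $0\to\mathrm{Ad}\to\mathrm{Ext}^1(\mathcal{F},\mathcal{F})\to\mathrm{Ext}^1(\mathcal{F}',\mathcal{F}')\to 0$, i.e.\ an Atiyah sequence whose anchor $T\mathcal{M}(\mathcal{X},\alpha)\to T\mathcal{M}(X)$ lands in the tangent space of the moduli of bundles on the \emph{coarse} curve $X$, not of $\mathcal{N}$. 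This passage through $\pi_*$ is the one genuinely new ingredient relative to \cite{LoMa}, and it is absent from your proposal. You could still rescue the bare statement on the open locus by observing that $T^*\mathcal{N}$ carries its canonical symplectic, hence Poisson, structure; but that is not the argument you give, it is not the Poisson structure the paper constructs (whose leaves see the map to $\mathcal{M}(X)$), and it leaves your declared ``main obstacle'' --- constructing $P$ and verifying the bracket --- aimed at an object that does not exist in this case. On the positive side, your insistence on a codimension-two condition for extending the bivector off $\mathcal{M}^0_H$ is a legitimate point that the paper's own proof treats rather lightly.
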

In the course of developing the proof of this result, we highlight the importance of certain short exact sequences (Atiyah sequences) that arise. This opens the way for generalizing the above theorem in several directions. We first show similarly that the moduli space  $\mathcal{M}_H(\mathcal{X},G,\alpha)$ for a fixed faithful representation $G \hookrightarrow \text{GL}(V)$ is also equipped with a Poisson structure:

\begin{thm}[Theorem \ref{405}]
	The moduli space $\mathcal{M}_H(\mathcal{X},G,\alpha)$ of stable $G$-Higgs bundles over a stacky curve $\mathcal{X}$  with fixed parabolic structure $\alpha$ admits a Poisson structure.
\end{thm}

The notion of stability we consider here does not depend on the choice of a faithful representation $G \hookrightarrow \text{GL}(V)$. An alternative notion of (semi)stability for parabolic principal $G$-bundles and parabolic $G$-Higgs bundles is considered in the more recent work of Biquard, Garc\'{i}a-Prada and Mundet i Riera \cite{BiGaRi}. Furthermore, it is proven by the authors recently that the stability condition considered in this paper is equivalent to Ramanathan's stability condition of the corresponding logahoric Higgs torsor \cite{KSZ23}.

Note that in the special case of a root stack and a parabolic structure when all parabolic weights are rational, there is an alternative description of parabolic bundles as orbifold bundles (see \cite{Biswas2,FuSt,KSZ,NaSt}). Therefore, our theorems provide an orbifold version of the result of Bottacin \cite{Bot} and Markman \cite{Mar} in the case of simple pole divisors, as was first conjectured by Logares and Martens \cite[\S 5.2]{LoMa}.

Let $\widetilde{\mathcal{M}}$ be a moduli stack, and suppose that $\mathcal{M}$ is a fine moduli space of $\widetilde{\mathcal{M}}$. We have $\widetilde{\mathcal{M}} \cong {\rm Hom}(-,\mathcal{M})$, therefore, a Poisson structure on $\mathcal{M}$ will induce a Poisson structure on $\widetilde{\mathcal{M}}$. A theory of Poisson structures on stacks is needed here, and this was introduced in the dissertation of Waldron \cite{Wal}. Based on the theory of Poisson structures on stacks, we show that the moduli stack $\widetilde{\mathcal{M}}_H(\mathcal{X},\alpha)$ has a Poisson structure as a stack (Corollary \ref{408}).

Next, we show that the moduli space $\mathcal{M}_H(\mathcal{X},\mathcal{L},\alpha)$ of stable $\mathcal{L}$-twisted Higgs bundles over $\mathcal{X}$ with fixed parabolic structure $\alpha$ is Poisson, subject to the existence of a certain short exact sequence for any stable bundle $\mathcal{F} \in \mathcal{M}(\mathcal{X},\alpha)$; the precise statement is the following:

\begin{thm}[Theorem \ref{501}]\label{thm103}
	Let $\mathcal{X}$ be a stacky curve over $\mathbb{C}$ and let $X$ be the coarse moduli space of $\mathcal{X}$. Let $\alpha$ be a parabolic structure on $X$ and denote by $\bar{D}=p_1 + \dots+p_k \in X$ the divisor with respect to $\alpha$. Let $\mathbb{D}=q_1 +\dots + q_k \in \mathcal{X}$ be the corresponding divisor on $\mathcal{X}$, where $q_i$ is the point corresponding to $p_i$. If there exists a short exact sequence
	\begin{equation*}
	0 \rightarrow  \mathcal{H}om(\mathcal{F}\otimes \mathcal{L},\mathcal{F}  \otimes \omega_{\mathcal{X}} ) \rightarrow \mathcal{E}nd(\mathcal{F}) \rightarrow \mathfrak{n} \rightarrow 0
	\end{equation*}
	for any stable bundle $\mathcal{F} \in \mathcal{M}(\mathcal{X},\alpha)$,  such that
	\begin{enumerate}
		\item the morphism $\mathcal{H}om(\mathcal{F}\otimes \mathcal{L},\mathcal{F}  \otimes \omega_{\mathcal{X}} )\rightarrow \mathcal{E}nd(\mathcal{F})$ is not surjective,
		\item $\mathfrak{n}$ is a sheaf of Lie algebras supported on $\mathbb{D}$,
	\end{enumerate}
	then the moduli space $\mathcal{M}_H(\mathcal{X},\mathcal{L},\alpha)$ admits a Poisson structure.
\end{thm}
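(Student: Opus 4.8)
The plan is to carry out, in this twisted and stacky situation, the Lie algebroid construction of \cite{LoMa} that underlies Theorems \ref{404} and \ref{405}, with the hypothesised short exact sequence playing the role of the Atiyah sequence. Write $\mathcal{M}^{0}:=\mathcal{M}(\mathcal{X},\alpha)$ for the smooth variety of stable bundles on $\mathcal{X}$ with parabolic structure $\alpha$, and let $\mathcal{M}_{H}^{0}\subseteq\mathcal{M}_{H}(\mathcal{X},\mathcal{L},\alpha)$ be the open dense locus of stable $\mathcal{L}$-twisted Higgs bundles whose underlying bundle is already stable. Over $\mathcal{M}^{0}$ the forgetful morphism exhibits $\mathcal{M}_{H}^{0}$ as the total space of a vector bundle $\mathbb{V}\to\mathcal{M}^{0}$ with fibre $\mathbb{V}_{\mathcal{F}}=H^{0}(\mathcal{X},\mathcal{E}nd(\mathcal{F})\otimes\mathcal{L})$, the space of $\mathcal{L}$-twisted Higgs fields, while the deformation theory of bundles over $\mathcal{X}$ recalled in the previous sections gives $T_{\mathcal{F}}\mathcal{M}^{0}\cong H^{1}(\mathcal{X},\mathcal{E}nd(\mathcal{F}))$.

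First I would apply $H^{\bullet}(\mathcal{X},-)$ to the given sequence. Since $\mathfrak{n}$ is supported on the finite set $\mathbb{D}$ one has $H^{1}(\mathcal{X},\mathfrak{n})=0$; since $\mathcal{F}$ is stable, $H^{0}(\mathcal{X},\mathcal{E}nd(\mathcal{F}))=\mathbb{C}\cdot\mathrm{id}_{\mathcal{F}}$; and hypothesis (1), together with stability, forces $H^{0}(\mathcal{X},\mathcal{H}om(\mathcal{F}\otimes\mathcal{L},\mathcal{F}\otimes\omega_{\mathcal{X}}))=0$, since the identity endomorphism is not a section of this proper subsheaf. The long exact cohomology sequence then collapses to
\[
0\longrightarrow\mathbb{C}\longrightarrow H^{0}(\mathcal{X},\mathfrak{n})\longrightarrow H^{1}(\mathcal{X},\mathcal{H}om(\mathcal{F}\otimes\mathcal{L},\mathcal{F}\otimes\omega_{\mathcal{X}}))\longrightarrow H^{1}(\mathcal{X},\mathcal{E}nd(\mathcal{F}))\longrightarrow0,
\]
and, letting $\mathcal{F}$ vary over $\mathcal{M}^{0}$, to a short exact sequence of vector bundles $0\to\mathcal{N}\to\mathcal{A}\to T\mathcal{M}^{0}\to0$ with $\mathcal{A}_{\mathcal{F}}=H^{1}(\mathcal{X},\mathcal{H}om(\mathcal{F}\otimes\mathcal{L},\mathcal{F}\otimes\omega_{\mathcal{X}}))$ and $\mathcal{N}_{\mathcal{F}}=H^{0}(\mathcal{X},\mathfrak{n})/\mathbb{C}$. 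Serre duality on the stacky curve identifies $\mathcal{A}_{\mathcal{F}}^{\vee}$ with $H^{0}(\mathcal{X},\mathcal{E}nd(\mathcal{F})\otimes\mathcal{L})=\mathbb{V}_{\mathcal{F}}$, so $\mathcal{A}^{\vee}\cong\mathcal{M}_{H}^{0}$ as a scheme over $\mathcal{M}^{0}$, and the dual of the above sequence reads $0\to T^{*}\mathcal{M}^{0}\to\mathcal{A}^{\vee}\to\mathcal{N}^{\vee}\to0$.

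The main point is then to promote $\mathcal{A}$ to a Lie algebroid over $\mathcal{M}^{0}$ with anchor the surjection $\mathcal{A}\to T\mathcal{M}^{0}$ above; once this is done, the canonical fibrewise-linear Poisson structure on the dual of a Lie algebroid endows $\mathcal{A}^{\vee}=\mathcal{M}_{H}^{0}$ with a Poisson structure, and one sees it recovers the canonical symplectic structure on $T^{*}\mathcal{M}^{0}\subseteq\mathcal{A}^{\vee}$. Hypothesis (2) provides the isotropy: as $\mathfrak{n}$ is a sheaf of Lie algebras in which the scalars are central, each $\mathcal{N}_{\mathcal{F}}=H^{0}(\mathcal{X},\mathfrak{n})/\mathbb{C}$ is a Lie algebra, and these assemble into a bundle of Lie algebras $\mathcal{N}$. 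For the bracket on sections of $\mathcal{A}$ itself I would follow \cite{LoMa}: the Atiyah sequence here has a deformation-theoretic origin, the hypercohomology of the $\mathcal{L}$-twisted Higgs deformation complex carries the commutator bracket of endomorphisms, and the Serre duality pairing in hypercohomology is used to verify that this descends to a bracket on $\Gamma(\mathcal{A})$ which is $T\mathcal{M}^{0}$-anchored by the Lie bracket of vector fields, restricts to the fibrewise bracket of $\mathcal{N}$, and satisfies the Leibniz rule and the Jacobi identity. I expect this to be the principal obstacle: the bracket is not the pointwise commutator, and checking the Jacobi identity requires the same delicate hypercohomological bookkeeping as in the parabolic case, now over a Deligne-Mumford stack and with $\mathbb{D}$ in place of the parabolic divisor; in particular the precise form of the map $\mathcal{H}om(\mathcal{F}\otimes\mathcal{L},\mathcal{F}\otimes\omega_{\mathcal{X}})\to\mathcal{E}nd(\mathcal{F})$ and condition (2) must be used to see that the induced bracket is well defined on cohomology.

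Finally I would pass from $\mathcal{M}_{H}^{0}$ to all of $\mathcal{M}_{H}(\mathcal{X},\mathcal{L},\alpha)$. As in the untwisted case, the Poisson bracket admits an intrinsic description via Serre duality on the hypercohomology of the deformation complex of an $\mathcal{L}$-twisted Higgs bundle, which makes sense at every stable point; on the dense open subset $\mathcal{M}_{H}^{0}$ it agrees with the Lie-Poisson bracket constructed above, so the Schouten bracket $[\pi,\pi]$ of the resulting bivector $\pi$ vanishes there, and since $\mathcal{M}_{H}(\mathcal{X},\mathcal{L},\alpha)$ is smooth and $[\pi,\pi]=0$ is a closed condition, it holds everywhere; alternatively one extends the bivector by a Hartogs-type argument, exactly as in the proof of Theorem \ref{404}. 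A remark on the hypotheses: condition (1) guarantees that the cohomology sequence has the displayed shape and that $\mathcal{N}\ne0$ (the genuinely Poisson, non-symplectic regime) — when it fails one necessarily has $\mathcal{L}\cong\omega_{\mathcal{X}}$ and recovers the symplectic situation of Theorem \ref{404} — while condition (2) is exactly what turns the isotropy into a bundle of Lie algebras, and hence $\mathcal{A}$ into a Lie algebroid rather than a bare extension of vector bundles.
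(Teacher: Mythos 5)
Your proposal follows essentially the same route as the paper: take cohomology of the hypothesised sequence, use the support condition on $\mathfrak{n}$ to kill $H^1(\mathcal{X},\mathfrak{n})$ and obtain an Atiyah-type extension of $T\mathcal{M}(\mathcal{X},\alpha)$, identify the dual of that extension with $\mathcal{M}^0_H(\mathcal{X},\mathcal{L},\alpha)$ via Grothendieck--Serre duality, and invoke the Poisson structure on the dual of a Lie algebroid before extending by density. The only (harmless) deviation is your claim that $H^0(\mathcal{X},\mathcal{H}om(\mathcal{F}\otimes\mathcal{L},\mathcal{F}\otimes\omega_{\mathcal{X}}))=0$, which the stated hypotheses do not quite force (the identity could a priori map to zero in $\mathfrak{n}$) and which the paper's argument does not need, since it only uses the kernel of the surjection ${\rm Ext}^1(\mathcal{F}\otimes\mathcal{L},\mathcal{F}\otimes\omega_{\mathcal{X}})\rightarrow{\rm Ext}^1(\mathcal{F},\mathcal{F})$.
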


For $X$ a smooth projective curve, $\bar{D}=p_1 + \dots + p_k$ a reduced effective divisor on $X$ and $\bar{r}=(r_1,\dots,r_k)$ a $k$-tuple of positive integers, let $X_{\bar{D},\bar{r}}$ denote the corresponding root stack and consider the natural map $\pi: \mathcal{X} \rightarrow X$ from the root stack $\mathcal{X}$ to its coarse moduli space $X$. Denote by $\mathbb{D}$ the reduced divisor of $\pi^{-1}(\bar{D})$. The correspondence between parabolic bundles on $(X,\bar{D})$ and bundles on $\mathcal{X}:=X_{\bar{D},\bar{r}}$  implies the correspondence in the stability conditions as in \cite[Remarque 10]{Bor} and, more precisely, in moduli spaces $\mathcal{M}(\mathcal{X},\alpha)\cong \mathcal{M}^{par}(X,\alpha)$. It is natural to extend this correspondence to Higgs bundles \cite{BMW2, BMW,NaSt}, thus having $\mathcal{M}_H(\mathcal{X},\alpha)\cong \mathcal{M}_H^{spar}(X,\alpha)$. 

Let now $\mathcal{L}$ be a line bundle over $\mathcal{X}$ and denote by $L$ the parabolic line bundle over $(X,\bar{D})$ corresponding to $\mathcal{L}$, that is, a line bundle over $X$ together with a collection of flags $L_{p_i} \supset \{0\}$ with a real weight $0\leq \alpha_{p_i} <1$ for each point $p_i \in \bar{D}$, $i=1,...,k$. There is a one-to-one correspondence between $\mathcal{L}$-twisted Higgs bundles on $\mathcal{X}$ and $L$-twisted parabolic bundles on $(X,\bar{D})$ (see \cite[\S 5]{KSZ2}). We have the following proposition:

\begin{prop}[Proposition \ref{602}]
	Let $\mathcal{X}=X_{\bar{D},\bar{r}}$ be a root stack. Denote by $X$ the coarse moduli space of $\mathcal{X}$. The following statements hold:
	\begin{enumerate}
		\item There is an isomorphism $\mathcal{M}_H(\mathcal{X},\alpha)\cong \mathcal{M}^{spar}_H\left(X,\alpha\right)$, where $\mathcal{M}^{spar}_H\left(X,\alpha\right)$ is the moduli space of strongly parabolic Higgs bundles over $X$ with parabolic structure $\alpha$.
		\item There is an isomorphism $\mathcal{M}_H(\mathcal{X},\omega_{\mathcal{X}}(\mathbb{D}),\alpha)\cong \mathcal{M}^{par}_H\left(X,\alpha\right)$, where $\omega_{\mathcal{X}}(\mathbb{D})$ is the canonical line bundle over $\mathcal{X}$.
		\item Let $\mathcal{L}$ be an invertible sheaf on $\mathcal{X}$ and let $\pi:\mathcal{X}\to X$ be the map from the root stack to its coarse moduli space. Denote by $L$ the corresponding parabolic line bundle of $\mathcal{L}$ on $(X,\bar{D})$. Then there is an isomorphism $\mathcal{M}_H(\mathcal{X},\mathcal{L},\alpha)\cong \mathcal{M}^{par}_H\left(X,L,\alpha\right)$.
	\end{enumerate}
\end{prop}

Under these considerations, we find that the short exact sequence 
\begin{equation*}
0 \rightarrow  \mathcal{H}om(\mathcal{F}\otimes \mathcal{L},\mathcal{F}  \otimes \omega_{\mathcal{X}} ) \rightarrow \mathcal{E}nd(\mathcal{F}) \rightarrow \mathfrak{n} \rightarrow 0
\end{equation*}
in Theorem \ref{thm103} can be translated in the language of parabolic bundles by taking $\mathcal{L}=\omega_{\mathcal{X}}(\mathbb{D})$ to
\begin{equation*}
0\to \mathcal{SP}ar\mathcal{E}nd(F)\to \mathcal{P}ar\mathcal{E}nd(F)\to \mathfrak{n} \to 0,
\end{equation*}
where $F$ is the corresponding parabolic bundle of $\mathcal{F}$. With respect to the above observation, Theorem \ref{thm103} gives an alternative proof to \cite{LoMa} on the existence of a Poisson structure on $\mathcal{M}_H^{par}(X,\alpha)$, the moduli space of stable parabolic Higgs bundles with parabolic structure $\alpha$ on $(X,\bar{D})$, where $X$ is an irreducible smooth curve and $\bar{D}$ is a reduced effective divisor.

The method via Lie algebroids used in this article provides the construction of Poisson structures on a wide class of Higgs bundle moduli spaces over stacky curves. In order to further investigate completely integrable systems embedded as symplectic leaves of these Poisson moduli spaces, an explicit construction of a canonical moment map would be required giving a Hamiltonian $G$-stack, so that the symplectic leaves are well-defined via an appropriate Marsden--Weinstein symplectic reduction theorem for symplectic stacks. We hope to explicitly demonstrate this more direct approach in a future article.

\section{Preliminaries}
In this preliminary section, we collect the necessary background on stacks that we shall need for our purposes, and we also consider parabolic bundles and compare them to bundles on root stacks. In this paper, all stacks and schemes are defined over $\mathbb{C}$. The interested reader may refer to \cite{BMW,Ol} for further background on the material covered in \S 2.1 - 2.3. A good reference for \S 2.4 is \cite[\S4, \S 5]{Simp2010}. In \S 2.5, we give the definition of parabolic structures of $G$-bundles, which depends on a fixed faithful representation $G \hookrightarrow \text{GL}(V)$.

\subsection{Deligne-Mumford Stacks}
A \emph{Deligne-Mumford stack} $\mathcal{X}$ is an algebraic stack such that there exists a surjective \'{e}tale morphism $U \rightarrow \mathcal{X}$, where $U$ is a $\mathbb{C}$-scheme. The data $(U,u)$ is called a \emph{chart} of $\mathcal{X}$, where $u$ is surjective. If $u: U \rightarrow \mathcal{X}$ is an \'{e}tale morphism (not necessarily surjective), the pair $(U,u)$ is called a \emph{local chart} of $\mathcal{X}$. Let $(U,u)$ and $(V,v)$ be two charts of $\mathcal{X}$. A \emph{morphism of charts} $(U,u)$ and $(V,v)$ is a morphism $f_{uv}:(U,u) \rightarrow (V,v)$ of schemes such that the following diagram commutes
\begin{center}
	\begin{tikzcd}
	U \arrow[rd,swap, "u"] \arrow[rr, "f_{uv}"] &  & V \arrow[ld,"v"] \\
	& \mathcal{X} &
	\end{tikzcd}
\end{center}
If $\mathcal{X}$ is locally of finite type and with finite diagonal, then there exists a coarse moduli space $X$ (as an algebraic space) of $\mathcal{X}$ \cite[Theorem 11.1.2]{Ol}. Denote by $\pi: \mathcal{X} \rightarrow X$ the natural morphism.

\begin{defn}
	A Deligne-Mumford stack $\mathcal{X}$ is \emph{smooth and projective} if it satisfies the following conditions:
	\begin{enumerate}
		\item there exists a surjective \'etale morphism $Y \rightarrow \mathcal{X}$ such that $Y$ is a smooth projective variety;
		\item $\mathcal{X}$ can be written as a global quotient $[Y \backslash \Gamma]$, where $\Gamma$ is a finite group;
		\item $\mathcal{X}$ has a coarse moduli space $X$, which is a smooth projective variety,
	\end{enumerate}
	and we say $\mathcal{X}$ is a \emph{smooth projective Deligne-Mumford stack}. Furthermore, if $\mathcal{X}$ is connected and of dimension one, we say that $\mathcal{X}$ is a stacky curve. 
\end{defn}

The idea of smooth projective Deligne-Mumford stacks is introduced by Simpson to establish a stacky version of the nonabelian Hodge correspondence \cite[Theorem 5.4]{Simp2010}. Compared to Simpson's definition, we also require that the coarse moduli space $X$ is smooth and projective for the purposes of this paper. 

\begin{defn}\label{204}
	Let $\mathcal{X}$ be a smooth projective Deligne--Mumford stack. A \emph{Cartier divisor} $D$ on $\mathcal{X}$ is defined on each chart $(U,u)$ as follows: there is a Cartier divisor $D_u$ on $U$ such that if $f_{uv}: (U,u) \rightarrow (V,v)$ is a morphism of charts, then $f^*_{uv}(D_v)=D_u$. We say a Cartier divisor $D$ has \emph{normal crossings} if for each chart $(U,u)$, the divisor $D_u$ has normal crossings.
\end{defn}

\subsection{Sheaves}
\subsubsection*{\textbf{Sheaves on Stacks}}
Let $\mathcal{X}$ be a Deligne-Mumford stack and let $(U,u)$ be a chart of $\mathcal{X}$. Instead of giving the precise definition of coherent sheaves on $\mathcal{X}$, we give a more workable definition. A \emph{coherent sheaf} $\mathcal{F}$ on $\mathcal{X}$ is defined to be a pair $(F,\sigma)$ such that $F$ is a coherent sheaf on $U$ and $\sigma: s^* F \xrightarrow{\cong} t^* F$ is an isomorphism, where 
\begin{align*}
s,t:U \times_{\mathcal{X}} U \rightrightarrows U
\end{align*}
are the source and target maps. In fact, this definition does not depend on the choice of the chart we take. We refer the reader to \cite[Chapter 7]{Ol} for more details. A coherent sheaf $\mathcal{F}$ is \emph{locally free} if the coherent sheaf $F$ in the corresponding pair $(F,\sigma)$ is locally free.

We next provide some examples of coherent sheaves on $\mathcal{X}$, and we omit the isomorphism $\sigma$ for simplicity. The \emph{structure sheaf $\mathcal{O}_{\mathcal{X}}$} is defined as $\mathcal{O}_U$ on the chart $(U,u)$. Let $\Omega^1_{\mathcal{X}}$ be the \emph{cotangent sheaf} on $\mathcal{X}$ defined on the local chart $(U,u)$ by $\Omega^1_{U}$.

\subsubsection*{\textbf{Higgs Bundles over stacks}}
Let $\mathcal{X}$ be a stacky curve. A \emph{Higgs bundle} on $\mathcal{X}$ is a pair $(\mathcal{F},\Phi)$, where $\mathcal{F}$ is a locally free sheaf on $\mathcal{X}$ and $\Phi: \mathcal{F} \rightarrow \mathcal{F} \otimes \Omega^1_{\mathcal{X}}$ is a morphism called a \emph{Higgs field}. Given a line bundle $\mathcal{L}$ on $\mathcal{X}$, an \emph{$\mathcal{L}$-twisted Higgs bundle} over $\mathcal{X}$ is a pair $(\mathcal{F},\Phi)$, where $\mathcal{F}$ is a locally free sheaf over $\mathcal{X}$ and $\Phi:\mathcal{F} \rightarrow \mathcal{F} \otimes \mathcal{L}$ is a morphism called an \emph{$\mathcal{L}$-twisted Higgs field}.

\subsubsection*{\textbf{Stability Condition}}
Let $\mathcal{X}$ be a stacky curve. Let $\pi: \mathcal{X} \rightarrow X$ be the natural map to its coarse moduli space. Denote by $q: \mathcal{X} \rightarrow \mathbb{C}$ the structure morphism. Let $\mathcal{F}$ be a locally free sheaf over $\mathcal{X}$. The \emph{degree} of $\mathcal{F}$ over $\mathcal{X}$ is defined (see \cite[\S 4.1]{Bor}) as
\begin{align*}
{\rm deg}(\mathcal{F}):=q_*(c_1(\mathcal{F})),
\end{align*}
where $c_1(\mathcal{F})$ is the first Chern class of $\mathcal{F}$. A locally free sheaf $\mathcal{F}$ is called \emph{semistable} (resp. \emph{stable}), if for any subsheaf $\mathcal{F}'$ with ${\rm rk}(\mathcal{F}') < {\rm rk}(\mathcal{F})$, it is
\begin{align*}
\frac{{\rm deg}(\mathcal{F}')}{ {\rm rk}(\mathcal{F}') } \leq \frac{{\rm deg}(\mathcal{F})}{ {\rm rk}(\mathcal{F}) } \quad (\text{resp. } <).
\end{align*}
Similarly, a Higgs bundle $(\mathcal{F},\Phi)$ is called \emph{semistable} (resp. \emph{stable}), if for any $\Phi$-invariant subsheaf $\mathcal{F}'$ with ${\rm rk}(\mathcal{F}') < {\rm rk}(\mathcal{F})$, it is
\begin{align*}
\frac{{\rm deg}(\mathcal{F}')}{ {\rm rk}(\mathcal{F}') } \leq \frac{{\rm deg}(\mathcal{F})}{ {\rm rk}(\mathcal{F}) } \quad (\text{resp. } <).
\end{align*}
Recall that $\Phi$-invariant subsheaf $\mathcal{F}'$ means that $\Phi(\mathcal{F}')\subseteq \mathcal{F}' \otimes \Omega^1_{\mathcal{X}}$.

\subsection{Principal bundles}
\subsubsection*{\textbf{Principal Bundles and $G$-Higgs bundles}}
Let $\mathcal{X}$ be a Deligne-Mumford stack, and we fix a chart $(U,u)$ of $\mathcal{X}$. Let $G$ be a connected complex reductive algebraic group. Denote by $\mathfrak{g}$ the Lie algebra of $G$. A \emph{$G$-bundle} $\mathcal{E}$ on $\mathcal{X}$ is defined as a pair $(E,\sigma)$, where $E$ is a $G$-bundle on $U$ and $\sigma: s^* E \xrightarrow{\cong} t^* E$ is an isomorphism. The following equivalence \cite[Proposition 1.2]{BMW} is well-known:
\begin{align*}
{\rm Bun}_{G} (\mathcal{X}) \cong \underline{{\rm Hom}}(\mathcal{X}, BG),
\end{align*}
where ${\rm Bun}_{G}(\mathcal{X})$ is the category of $G$-bundles fibered in groupoids on $\mathcal{X}$ and $BG$ is the classifying stack of $G$.

Now let $\mathcal{X}$ be a stacky curve. A \emph{$G$-Higgs bundle} on $\mathcal{X} $ is a pair $(\mathcal{E},\Phi)$ such that $\mathcal{E}$ is a principal $G$-bundle on $\mathcal{X}$ and $\Phi \in H^0(\mathcal{X},\mathcal{E}(\mathfrak{g}) \otimes \Omega^1_{\mathcal{X}} )$ is a section, where $\mathcal{E}(\mathfrak{g}):= \mathcal{E} \times_G \mathfrak{g}$ is the adjoint bundle.

\subsubsection*{\textbf{Stability Condition}}
Let $\mathcal{X}$ be a stacky curve, and we write it as a global quotient $\mathcal{X}=[Y/\Gamma]$. By definition, a $G$-Higgs bundle $\mathcal{E}$ on $\mathcal{X}$ is equivalent to a $\Gamma$-equivariant $G$-Higgs bundle on $Y$ (see \cite[\S 2]{BaBisNa}). We fix a faithful representation $G \hookrightarrow \text{GL}(V)$. Denote by $\mathcal{E}(V):=\mathcal{E} \times_G V$ the associated bundle. In this case, a Higgs field $\Phi \in H^0(\mathcal{X},\mathcal{E}(\mathfrak{g}) \otimes \Omega^1_{\mathcal{X}})$ corresponds to an element in $H^0(\mathcal{X}, \mathcal{E}nd(\mathcal{E}(V)) \otimes \Omega^1_{\mathcal{X}})$. A $G$-Higgs bundle $(\mathcal{E},\Phi)$ is \emph{semistable} (resp. stable), if the associated Higgs bundle $(\mathcal{E}(V),\Phi)$ is semistable (resp. stable). The semistability condition of $G$-Higgs bundles is called \emph{semiharmonic} in Simpson's papers \cite[page 49]{Simp3} and \cite[\S 7]{Simp2010}.

\begin{rem}
	Indeed, there are several ways to define the semistability of principal $G$-bundles. In this paper, we shall use the above definition for semistability, since it has been proved that the moduli space of $G$-Higgs bundles over a projective Deligne-Mumford stack exists in this case \cite{Simp2010}.
\end{rem}

\subsection{Root Stacks}\label{rootsta}
Root stacks are a highly significant case of Deligne-Mumford stacks. It is known that a smooth projective Deligne-Mumford stack is locally isomorphic to a root stack (see \cite{Simp2010}). We shall review this result in this subsection.

Let $X$ be a smooth projective variety, and let $L$ be a line bundle on $X$. Note that the following categories are equivalent
\begin{align*}
\{\text{invertible sheaves on } X\} \longleftrightarrow \{\text{morphisms: } X \rightarrow B\mathbb{G}_m\}.
\end{align*}

Let $s \in \Gamma(X,L)$ be a section of $L$. The pair $(L,s)$ defines a morphism $X \rightarrow [\mathbb{A}^1/\mathbb{G}_m]$. The category of pairs $(L,s)$, where $L$ is a line bundle on $X$ and $s \in \Gamma(X,L)$, and the category of morphisms $X \rightarrow [\mathbb{A}^1/\mathbb{G}_m]$ are equivalent. This equivalence can be generalized to $n$ line bundles and $n$ sections. More precisely, the category of morphisms $X \rightarrow [\mathbb{A}^n/\mathbb{G}^n_m]$ is equivalent to the category of $n$-tuples $(L_i,s_i)^n_{i=1}$, where $L_i$ is a line bundle on $X$ and $s_i \in \Gamma(X,L_i)$ (see \cite[Lemma 2.1.1]{Cad}). 

Let $\theta_r: [\mathbb{A}^1/\mathbb{G}_m] \rightarrow [\mathbb{A}^1/\mathbb{G}_m]$ be the morphism induced by $r$-th power maps on both $\mathbb{A}^1$ and $\mathbb{G}_m$. Let $X_{(L,s,r)}$ be the fiber product
$X \times_{[\mathbb{A}^1/\mathbb{G}_m], \theta_r}[\mathbb{A}^1/\mathbb{G}_m]$. The stack $X_{(L,s,r)}$ is then called the \emph{$r$-th root stack}. Let $\bar{D}=(D_1,\dots,D_k)$ be a $k$-tuple of effective Cartier divisors $D_i \subseteq X$. Let $\bar{r}=(r_1,\dots,r_k)$ be a $k$-tuple of positive integers. We define $\theta_{\bar{r}}: [\mathbb{A}^k/\mathbb{G}^k_m] \rightarrow [\mathbb{A}^k/\mathbb{G}^k_m]$ to be the morphism $\theta_{r_1} \times \dots \times \theta_{r_k}$. The \emph{Cadman-Vistoli root stack $X_{\bar{D},\bar{r}}$} is defined as the fiber product $X \times_{[\mathbb{A}^k/\mathbb{G}^k_m], \theta_{\bar{r}}}[\mathbb{A}^k/\mathbb{G}^k_m]$, where the morphism $X \rightarrow [\mathbb{A}^k/\mathbb{G}^k_m]$ is defined by $(\mathcal{O}(D_i),s_{D_i})^k_{i=1}$ (see \cite[Definition 2.2.4]{Cad}).

Let $\mathcal{X}$ be a Deligne-Mumford stack locally of finite presentation with finite diagonal. Denote by $X$ its coarse moduli space. Let $x$ be a geometric point of $\mathcal{X}$, that is, a morphism $x: {\rm Spec} \,\mathbb{C} \rightarrow \mathcal{X}$. Denote by $G_x$ the automorphism group of $x$. Since $\mathcal{X}$ is a Deligne-Mumford stack, $\Gamma_x$ is a finite group. Let $\bar{x}$ be the corresponding point of $x$ in the coarse moduli space. There exists (see \cite[Theorem 11.3.1]{Ol}) a neighborhood $(U,u)$ of $\bar{x}$ and a finite morphism $V \rightarrow U$ such that
\begin{align*}
\mathcal{X}\times_X U \cong [V/ \Gamma_x].
\end{align*}
This property tells us that a Deligne-Mumford stack is locally a quotient stack. More precisely, let $x \in \mathcal{X}$ be a geometric point, and let $\bar{x}$ be its corresponding point in $X$. There is an \'{e}tale neighborhood $(U,u)$ of $\bar{x} \in X$ such that
\begin{align*}
\mathcal{X} \times_{X} U \cong U_{\bar{D},\bar{r}}
\end{align*}
for some $\bar{D}$ and $\bar{r}$ (see \cite[\S 4 and \S 5]{Simp2010}).

\subsection{Parabolic Bundles}
Now we consider a special case of a root stack. Let $X$ be a smooth projective curve and let $\bar{D}=(D_1,\dots,D_k)$ be a $k$-tuple of divisors $D_i \subseteq X$ such that $D_i=p_i$ is a single point. Let $\bar{r}=(r_1,\dots,r_k)$ be a $k$-tuple of positive integers. The notation $r(p)$ shall refer to the integer in $\bar{r}$ corresponding to the point $p \in \bar{D}$.

\subsubsection*{\textbf{Parabolic Structures and Parabolic Bundles}}
We assume that $X_{\bar{D},\bar{r}}$ can be written as a global quotient $[U/\Gamma] \cong X_{\bar{D},\bar{r}}$, where $U$ is a smooth projective variety. There is a natural map $\pi: U \rightarrow X$. A locally free sheaf $\mathcal{F}$ of rank $n$ over $X_{\bar{D},\bar{r}}$ is equivalent to a locally free sheaf $F$ on $U$ together with a local trivialization $\Theta_p : F_{p} \rightarrow U_p \times \mathbb{C}^n$ for each point $p \in \bar{D}$, where $U_p$ is a neighborhood of $\pi^{-1}(p)$ and $F_p:=F|_{U_p}$, such that $\Theta_p$ is $\mathbb{Z}_{r(p)}$-equivariant with respect to the following action around the point $p \in \bar{D}$
\begin{align*}
t(z;z_1,z_2,...,z_n)=(tz;t^{\alpha'_1(p)}z_1,t^{\alpha'_2(p)}z_2,...,t^{\alpha'_n(p)}z_n),
\end{align*}
where $\alpha'_1(p),...,\alpha'_n(p)$ are integers such that $0 \leq \alpha'_1(p) \leq \alpha'_2(p) \leq ... \leq \alpha'_n(p) < r(p)$. We can take local holomorphic sections $f_1,...,f_n$ of $F$ such that $\{f_1(p),...,f_n(p)\}$ is a basis of $F_p$ consisting of eigenvectors. Then, we set
\begin{align*}
\Theta=(t^{-\alpha'_1(p)}(t \cdot f_1),...,t^{-\alpha'_n(p)}(t \cdot f_n)),
\end{align*}
where $t \cdot f_i(x)=t^{\alpha'_i(p)}f_i(x)$. Suppose that there are $l(p)$ distinct values in $\{\alpha'_1(p),\dots,\alpha'_n(p)\}$, and let $\{\alpha_1(p),\dots,\alpha_{l(p)}(p)\}$ be the $l(p)$ distinct values such that
\begin{align*}
    0 \leq \alpha_1(p) < \dots < \alpha_{l(p)}(p) < 1.
\end{align*}
Then, we define a weighted filtration of $F$ on $\pi^{-1}(p)$,
\begin{align*}
F|_{\pi^{-1}(p)} =& F_1(p) \supseteq \dots  \supseteq F_{l(p)}(p) \supseteq 0,\\
& \frac{\alpha_1(p)}{r(p)} < \dots < \frac{\alpha_{l(p)}(p)}{r(p)},
\end{align*}
where $F_i(p) / F_{i+1}(p)$ corresponds to the subspace, on which the action of $t$ is given by $t^{\alpha_i(p)}$. The weighted filtration associated to each puncture determines a \emph{parabolic structure} on $F$, and we prefer to use the notation $\alpha$ for it.

A \emph{parabolic bundle} on $(X,\bar{D})$ is a pair $(F,\alpha)$, where $F$ is a locally free sheaf and $\alpha$ is a parabolic structure over each of the points in $\bar{D}$. A locally free sheaf $\mathcal{F}$ on $X_{\bar{D},\bar{r}}$ is equivalent to a parabolic bundle on $(X,\bar{D})$. We refer the reader to \cite{Biswas2,FuSt,NaSt} for more details on this correspondence. Furthermore, the parabolic structure itself is an important topological invariant of a locally free sheaf over $X_{\bar{D},\bar{r}}$, which can be used in describing the connected components of the moduli space of locally free sheaves on $\mathcal{X}$ \cite{KSZ}. With respect to this correspondence, we say that a locally free sheaf $\mathcal{F}$ on $X_{\bar{D},\bar{r}}$ has \emph{parabolic structure $\alpha$} if the corresponding parabolic bundle on $X$ is of parabolic type $\alpha$.

A parabolic structure of a locally free sheaf over a point $p$ corresponds to a parabolic group. A parabolic structure over a point is given by integers $\alpha'_1(p) \leq \dots \leq \alpha'_n(p)$, which defines a \emph{type}. For example, the sequence of integers $(1,1,3,3,3,3,4,4,4)$ gives us the type $(2,4,3)$. This type in turn uniquely determines a parabolic subgroup of ${\rm GL}_n$:
$\begin{pmatrix}
A_1 & * & * \\
0   & A_2 & *\\
0  & 0   & A_3
\end{pmatrix},$ where $A_1$ is a $2$ by $2$ matrix, $A_2$ is a $4$ by $4$ matrix and $A_3$ is a $3$ by $3$ matrix.

The correspondence between parabolic structures and parabolic groups can be also understood from Higgs fields. Let now $\Phi$ be a Higgs field of $\mathcal{F}$. With respect to the above setup, $\Phi$ can be written as follows around $p \in \bar{D}$:
\begin{align*}
\Phi=(\Phi_{ij})_{1 \leq i,j \leq l(p)},
\end{align*}
where
\begin{align*}
\Phi_{ij}=
\begin{cases}
z^{\alpha_{i}(p)-\alpha_{j}(p)} \hat{\Phi}_{ij}(z^{r(p)})\frac{dz}{z} & \mathrm{ if } \alpha_i \geq \alpha_j\\
0 & \mathrm{ if } \alpha_i < \alpha_j,
\end{cases}
\end{align*}
and $\hat{\Phi}_{ij}$ are block matrices of holomorphic functions on $F$. Note that the morphism $\Phi$ can be regarded as an element in the parabolic subgroup. In fact, the calculation also works for any endomorphism of $\mathcal{F}$ (see \cite{KSZ,NaSt}).

\subsubsection*{\textbf{Parabolic Degree and Stability Condition}}
The \textit{parabolic degree} of a parabolic vector bundle $F$ is given by
\[\pardeg(F)=\deg(F)+\sum_{p\in D}\sum_{i=1}^{l(p)} \alpha_i(p) \cdot \dim (F_i(p)/F_{i+1}(p)) = \deg(F) + \sum_{p\in D}\sum_{i=1}^{n} \alpha'_i(p).\]
We call a parabolic vector bundle $F$ \textit{semistable} (resp. \textit{stable}) if for all parabolic subbundles $F'$, we have
\[\frac{\pardeg(F')}{\mathrm{rk}(F')}\le \frac{\pardeg(F)}{\mathrm{rk}(F)} \quad (\text{resp. } <).\]
From the correspondence between parabolic bundles $F$ over $X$ and bundles $\mathcal{F}$ over $X_{\bar{D},\bar{r}}$, we have
\begin{align*}
\deg(\mathcal{F})=\pardeg(F).
\end{align*}
Note that this property also provides that $F$ is semistable (resp. stable) if and only if $\mathcal{F}$ is semistable (resp. stable).

\subsubsection*{\textbf{Parabolic Higgs Bundles}}
Let $E$ be a parabolic bundle on $(X,\bar{D})$. Let $\omega_X$ be the canonical line bundle of $X$. A \emph{parabolic Higgs field $\Phi$} is a section $H^0(X,\mathcal{E}nd(E) \otimes \omega_X(\bar{D}))$, which preserves the filtration on each puncture $p \in \bar{D}$, i.e. $\Phi|_p (F_i(p)) \subseteq F_i(p) \otimes \omega_X(\bar{D})$. If $\phi|_p (F_i(p)) \subseteq F_{i+1}(p) \otimes \omega_X(\bar{D})$, it is called a \emph{strongly parabolic Higgs field}. Roughly speaking, on each puncture, parabolic Higgs fields can be regarded as elements in the parabolic subgroup, while strongly parabolic Higgs fields are regarded as elements in the unipotent subgroup of the parabolic subgroup. We refer the reader to \cite{LoMa,Yoko2} for more details.

We denote the sheaf of parabolic homomorphisms between two parabolic vector bundles $E$ and $F$ by $\mathcal{P}ar\mathcal{H}om(E,F)$, the sheaf of strongly parabolic homomorphisms by $\mathcal{SP}ar\mathcal{H}om(E,F)$. In addition, we denote $\mathcal{P}ar\mathcal{E}nd(E)=\mathcal{P}ar\mathcal{H}om(E,E)$ and $\mathcal{SP}ar\mathcal{E}nd(E)=\mathcal{SP}ar\mathcal{H}om(E,E)$. We now define:

\begin{enumerate}
	\item  A \textit{parabolic Higgs bundle} over $(X,\bar{D})$ is a pair $(E,\Phi)$, where $E$ is a parabolic bundle and $\Phi\in H^0(X,\mathcal{P}ar\mathcal{E}nd(E)\otimes \omega_X(\bar{D}))$. We call it \emph{stable} (resp. semistable) if it is stable (resp. semistable) with respect to the $\Phi$-invariant subbundles.
	\item A \textit{strongly parabolic Higgs bundle} $(X,\bar{D})$ is a pair $(E,\Phi)$ where $E$ is a parabolic bundle and $\Phi\in H^0(X,\mathcal{SP}ar\mathcal{E}nd(E)\otimes \omega_X(\bar{D}))$.
	\item Let $L$ be a parabolic line bundle on $X$. We call \textit{$L$-twisted parabolic Higgs bundle} over $(X,\bar{D})$ a pair $(E,\Phi)$, where $E$ is a parabolic bundle and $\Phi\in H^0(X,\mathcal{P}ar\mathcal{E}nd(E)\otimes L)$.
\end{enumerate}

\subsubsection*{\textbf{Parabolic Structure of Principal Bundles}}
Closing this subsection, we give the definition of the parabolic structure of a $G$-bundle. We fix a faithful representation $G\hookrightarrow \text{GL}(V)$. Let $\mathcal{E}$ be a $G$-bundle on $\mathcal{X}$ and denote by $\mathcal{E}(V)$ the associated bundle. We say that the \emph{parabolic structure} of $\mathcal{E}$ is $\alpha$, if the parabolic structure of the corresponding associated bundle $\mathcal{E}(V)$ is $\alpha$. Although the parabolic structure of a principal $G$-bundle depends on the choice of the faithful representation in this definition, the stability condition of the associated bundle is equivalent to Ramanathan's stability condition for the corresponding parabolic $G$-Higgs bundle (or logahoric Higgs torsor). This property is recently studied and proven by the authors in \cite{KSZ23}.

\section{Deformation Theory on Moduli Spaces of Higgs Bundles over Deligne-Mumford Stacks}
In this section, we review some results on the deformation theory of moduli spaces of Higgs bundles over Deligne-Mumford stacks, which will help us calculate the tangent space of the moduli spaces we are interested in. In \S 3.1, we define all the moduli spaces we consider in this paper and in \S 3.2 we review the deformation theory on those moduli spaces. In \S 3.3, we review the Grothendieck duality of coherent sheaves over Deligne-Mumford stacks, while in \S 3.4 we restrict to stacky curves and apply the results from \S 3.2 and \S 3.3 to construct a morphism
$T^*(\mathcal{M}_H(\mathcal{X})) \rightarrow T(\mathcal{M}_H(\mathcal{X}))$, which will be used in order to construct a Poisson structure on $\mathcal{M}_H(\mathcal{X})$ later on in \S 4.

\subsection{Moduli Space of Higgs Bundles on Smooth Projective Deligne-Mumford Stacks}
Let $\mathcal{X}$ be a smooth projective Deligne-Mumford stack. We first review the process of constructing the moduli space $\mathcal{M}_H(\mathcal{X})$ (see \cite{Simp2010} for more details).

Let $Y \rightarrow \mathcal{X}$ be a surjective \'etale morphism such that $Y$ is a smooth projective variety, and this admits a proper hyper-covering by smooth projective varieties \cite[Theorem 5.8]{Simp2010}. In other words, there is a simplicial resolution of $\mathcal{X}$ by smooth projective varieties. We briefly review next the construction of a simplicial resolution of $\mathcal{X}$. The first step of this construction is given by the existence of a surjective \'etale morphism $Y_0 \rightarrow \mathcal{X}$, where $Y_0:=Y$. Then we look at $Y_0 \times_{\mathcal{X}} Y_0$. By resolving singularities, we get a smooth projective variety $Y_1$. This provides the starting point of a simplicial resolution $Y_1 \rightrightarrows Y_0 \rightarrow \mathcal{X}$. Iterating the process, we get the simplicial resolution $Y_{\bullet}$ of $\mathcal{X}$. Since each $Y_k$ is a smooth projective variety over $\mathbb{C}$, the moduli space $\mathcal{M}_{H}(Y_k)$ of stable Higgs bundles over $Y_k$ exists \cite[Theorem 4.7]{Simp2}. Thus, there is a natural way to construct the moduli space $\mathcal{M}_{H}(Y_{\bullet})$ of semistable Higgs bundles over $Y_{\bullet}$ \cite[\S 6]{Simp2010}. Indeed, the moduli space of stable (resp. semistable) Higgs bundles over $Y_{\bullet}$ is isomorphic to the moduli space of stable (resp. semistable) Higgs bundles over $\mathcal{X}$ \cite[\S 9]{Simp2010}:
\begin{align*}
\mathcal{M}_{H}(Y_{\bullet}) \cong \mathcal{M}_{H}(\mathcal{X}).
\end{align*}
The moduli space of Higgs bundles over $\mathcal{X}$ is proved to be a quasi-projective scheme \cite[\S 6]{Simp2010}.

In this paper, we prefer to consider the stable locus of the moduli space, but some of our results can be extended to the semistable case. We use the following notations for the moduli spaces we consider:
\begin{itemize}
	\item $\mathcal{M}(\mathcal{X},\bullet_1,\bullet_3)$: the moduli space of stable bundles on $\mathcal{X}$,
	\item $\mathcal{M}_H(\mathcal{X},\bullet_1,\bullet_2,\bullet_3)$: the moduli space of stable Higgs bundles on $\mathcal{X}$,
	\item $\mathcal{M}^{par}(X,\bullet_1,\bullet_3)$: the moduli space of stable parabolic bundles on $X$,
	\item $\mathcal{M}_H^{par}(X,\bullet_1,\bullet_2,\bullet_3)$: the moduli space of stable parabolic Higgs bundles on $X$,
\end{itemize}
where $\bullet_1$ is the position for the structure group $G$, $\bullet_2$ is for the line bundle $\mathcal{L}$ (as the twisting bundle) and $\bullet_3$ is for the parabolic structure $\alpha$. For example, $\mathcal{M}_H(\mathcal{X},G,\alpha)$ denotes the moduli space of stable $G$-Higgs bundles on $\mathcal{X}$ with parabolic structure $\alpha$. The moduli spaces over Deligne-Mumford stacks are constructed in \cite{Simp2010}, while their parabolic analogs were constructed in \cite{Yoko1}.

\subsection{Deformation Theory}
The goal of this subsection is to calculate the tangent space of $\mathcal{M}_H(\mathcal{X})$, which is the moduli space of Higgs bundles on $\mathcal{X}$.

The moduli space $\mathcal{M}_{H}(\mathcal{X})$ represents the following moduli problem
\begin{align*}
\widetilde{\mathcal{M}}_{H}(\mathcal{X}): (\text{Sch/$\mathbb{C}$})^{{\rm op}} \rightarrow \text{Set}
\end{align*}
such that for each $\mathbb{C}$-scheme $T$, $\widetilde{\mathcal{M}}_{H}(\mathcal{X})(T)$ is the set of isomorphism classes of $T$-flat families of stable Higgs bundles $(\mathcal{F},\Phi)$. In this section, we shall be using the notation $\widetilde{\mathcal{M}}$ for the moduli problem $\widetilde{\mathcal{M}}_H(\mathcal{X})$ and $\mathcal{M}$ for the moduli space of stable Higgs bundles. 

\begin{rem}
	A moduli problem is usually defined as a functor, which is taken as the first step to construct moduli spaces \cite{HarMor}. This functor can be improved to be a category fibered in groupoids, and therefore, a ``moduli problem" can be equipped with a stack structure. In this paper, we use the same notation $\widetilde{\mathcal{M}}$ for moduli problems and the corresponding stacks. We refer the reader to \cite{CasaWise} for more details.
\end{rem}

Let $\text{Spec}(A)$ be an affine scheme, and let $M$ be an $A$-module. Let $\xi=(\mathcal{F},\Phi)$ be an element in $\widetilde{\mathcal{M}}(A)$, where $\widetilde{\mathcal{M}}(A):=\widetilde{\mathcal{M}}(\text{Spec}(A))$. There is a natural map \begin{align*}
\widetilde{\mathcal{M}}(A[M]) \rightarrow \widetilde{\mathcal{M}}(A).
\end{align*}
Denote by $\widetilde{\mathcal{M}}_{\xi}(A[M])$ the pre-image of the element $\xi \in \widetilde{\mathcal{M}}(A)$. In other words, $\widetilde{\mathcal{M}}_{\xi}(A[M])$ is the set of elements whose restriction to $\mathcal{X}_A$ is $\xi$. The set $\widetilde{\mathcal{M}}_{\xi}(A[M])$ is known as the set of \emph{deformations} of $\xi$ with respect to the extension
\begin{align*}
0\rightarrow M \rightarrow A[M] \rightarrow A \rightarrow 0.
\end{align*}
Now let $A=\mathbb{C}$ and let $M$ be the free rank one $A$-module generated by $\varepsilon$. We consider the short exact sequence
\begin{align*}
0\rightarrow (\varepsilon) \rightarrow \mathbb{C}[\varepsilon] \rightarrow \mathbb{C} \rightarrow 0,
\end{align*}
where $\mathbb{C}[\varepsilon]$ is the ring $\mathbb{C}[\varepsilon]/(\varepsilon^2)$ and we abuse the notation here.

Given an element $\xi \in \widetilde{\mathcal{M}}(\mathbb{C})$, an \emph{infinitesimal deformation} of $\xi$ is an element in $\widetilde{\mathcal{M}}_{\xi}(\mathbb{C}[\varepsilon])$. It is well-known that the set of all infinitesimal deformations of $\xi$ is the tangent space of the moduli space $\mathcal{M}$ at the point $\xi$.

Let $\xi=(\mathcal{F},\Phi)$ be an element in $\widetilde{\mathcal{M}}(A)$. The deformation complex $C_M^{\bullet}(\mathcal{F},\Phi)$ is defined as
\begin{align*}
C_M^{\bullet}(\mathcal{F},\Phi): C_M^0(\mathcal{F}) = \mathcal{E}nd(\mathcal{F}) \otimes M \xrightarrow{e(\Phi)} C_M^1(\mathcal{F}) = \mathcal{E}nd(\mathcal{F}) \otimes \Omega^1_{\mathcal{X}} \otimes M,
\end{align*}
where the map $e(\Phi)$ is given by
\begin{align*}
e(\Phi)(s)=-\rho(s)(\Phi)
\end{align*}
and $p_{\mathcal{X}}: \mathcal{X} \times_{\mathbb{C}} A \rightarrow \mathcal{X}$ is the natural projection. If there is no ambiguity, we omit the symbols $M$, $\mathcal{F}$, $\Phi$ and use the notation
\begin{align*}
C^{\bullet}: C^0 = \mathcal{E}nd(\mathcal{F}) \otimes M \xrightarrow{e(\Phi)} C^1 = \mathcal{E}nd(\mathcal{F}) \otimes \Omega^1_{\mathcal{X}} \otimes M
\end{align*}
for the deformation complex. Biswas and Ramanan in \cite{BisRam} first described the set of infinitesimal deformations of a Higgs bundle over a smooth projective variety and proved that the set of deformations is isomorphic to the first hypercohomology of a two-term complex. This approach was generalized to the case of Higgs bundles over a Deligne-Mumford stack in \cite{Sun191}. Therefore, we have the following proposition:

\begin{prop}[Proposition 3.3 in \cite{Sun191}]\label{301}
	Let $\xi =(\mathcal{F},\Phi)$ be a Higgs bundle in $\widetilde{\mathcal{M}}(A)$. The set of deformations $\widetilde{\mathcal{M}}_{\xi}(A[M])$ is isomorphic to the hypercohomology group $\mathbb{H}^1(C^{\bullet})$, where $C^{\bullet}$ is the complex
	\begin{align*}
	C^{\bullet}:C^0 =\mathcal{E}nd(\mathcal{F}) \otimes M \xrightarrow{e(\Phi)} C^1 = \mathcal{E}nd(\mathcal{F}) \otimes \Omega^1_{\mathcal{X}} \otimes M,
	\end{align*}
	where $e(\Phi)(s)=-\rho(s)(\Phi)$ is defined as above.
\end{prop}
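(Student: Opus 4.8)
The plan is to reduce the statement to a \v{C}ech-theoretic computation on an \'etale (hyper)covering of $\mathcal{X}$, exactly as in the scheme case of Biswas--Ramanan \cite{BisRam}, carried through the simplicial-resolution formalism recalled in \S 3.1. First I would record the two facts that make the problem linear. Since $A[M]\to A$ is a square-zero extension ($M^2=0$ in $A[M]$) and $\mathcal{F}$ is locally free, every deformation $\mathcal{F}_T$ of $\mathcal{F}$ to $\mathcal{X}\times_{\mathbb{C}}\mathrm{Spec}(A[M])$ is again locally free, hence flat over $A[M]$, and is \'etale-locally isomorphic to the pullback of $\mathcal{F}$; moreover any automorphism of such a local model restricting to the identity over $A$ has the form $\mathrm{id}+a$ with $a$ a section of $\mathcal{E}nd(\mathcal{F})\otimes M$. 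So a deformation of $\xi=(\mathcal{F},\Phi)$ is encoded entirely by gluing data relative to a fixed cover.

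Next I would fix a covering realizing $\mathrm{Coh}(\mathcal{X})$ by descent data --- concretely the simplicial resolution $Y_{\bullet}\to\mathcal{X}$ of \S 3.1, refined so that $\mathcal{F}$ trivializes on the relevant opens $U_i$ --- and unwind what a deformation of $\xi$ is in these terms: (i) transition automorphisms $\tilde g_{ij}=g_{ij}(\mathrm{id}+a_{ij})$ with $a_{ij}\in(\mathcal{E}nd(\mathcal{F})\otimes M)(U_{ij})$, whose cocycle condition $\tilde g_{ij}\tilde g_{jk}=\tilde g_{ik}$ becomes, modulo $M^2$, the \v{C}ech-closedness $\delta(\{a_{ij}\})=0$; and (ii) local Higgs fields $\tilde\Phi_i=\Phi_i+\phi_i$ with $\phi_i\in(\mathcal{E}nd(\mathcal{F})\otimes\Omega^1_{\mathcal{X}}\otimes M)(U_i)$, whose compatibility with the $\tilde g_{ij}$ forces
\[
\phi_i-\phi_j = e(\Phi)(a_{ij}),
\]
where $e(\Phi)(s)=-\rho(s)(\Phi)$ is the commutator with $\Phi$. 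Thus the pair $(\{a_{ij}\},\{\phi_i\})$ is precisely a degree-one cocycle of the total \v{C}ech complex of $C^{\bullet}$, and a change of local trivializations by a $0$-cochain $\{b_i\}$, $b_i\in(\mathcal{E}nd(\mathcal{F})\otimes M)(U_i)$, replaces it by $(\{a_{ij}+b_i-b_j\},\{\phi_i+e(\Phi)(b_i)\})$, i.e.\ by a cohomologous cocycle. This yields a well-defined map $\widetilde{\mathcal{M}}_{\xi}(A[M])\to\mathbb{H}^1(C^{\bullet})$, and running the construction backwards (reassemble $\mathcal{F}_T$ by gluing $\mathcal{F}\otimes A[M]$ along the $g_{ij}(\mathrm{id}+a_{ij})$ and $\Phi_T$ from the $\tilde\Phi_i$) shows it is a bijection, natural in $M$.

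The hard part will be the bookkeeping forced by the stacky setting rather than any single estimate. On $\mathcal{X}$ a Zariski \v{C}ech complex is insufficient, so one must carry the whole argument on the \'etale site, using the equivalence between $\mathrm{Coh}(\mathcal{X})$ and coherent sheaves-with-descent on $Y_{\bullet}$ (\S 2.2) and the fact (\S 3.1, \cite{Simp2010}) that the hypercohomology of a two-term complex on $\mathcal{X}$ is computed through that simplicial resolution; one must also check that the \v{C}ech--to--derived comparison holds in this generality, so that the cocycle classes genuinely land in $\mathbb{H}^1(C^{\bullet})$. Finally, to identify $\widetilde{\mathcal{M}}_{\xi}(A[M])$ with the Zariski tangent space of the moduli \emph{space} (rather than the moduli stack) one uses that stability is an open condition --- so deformations of a stable pair stay stable --- and that for stable $\xi$ the automorphisms are only scalars, whence the equivalence relation (5) in the moduli problem acts trivially on $\mathrm{Spec}(\mathbb{C}[\varepsilon])$-families; this is where the passage from the deformation groupoid to the deformation \emph{set} is justified. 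Since all of these ingredients are available from \S\S 2--3 and from \cite{Sun191,BisRam}, the proof consists in assembling them in the order above.
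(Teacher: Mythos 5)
The paper does not prove Proposition \ref{301} at all: it is imported verbatim as Proposition 3.3 of \cite{Sun191}, with only the remark that it generalizes the Biswas--Ramanan computation \cite{BisRam} to Deligne--Mumford stacks. Your argument --- square-zero linearization, \v{C}ech gluing data $(\{a_{ij}\},\{\phi_i\})$ on an \'etale (simplicial) cover forming a degree-one cocycle of the total complex of $C^{\bullet}$, coboundaries matching changes of trivialization --- is exactly that standard Biswas--Ramanan route and is the approach taken in the cited source, so it is correct and consistent with the proof the paper relies on.
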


The above proposition implies the following corollary:
\begin{cor}\label{302}
	If a Higgs bundle $(\mathcal{F},\Phi)$ is stable, then the tangent space of $\mathcal{M}_H(\mathcal{X})$ at the point $\xi=(\mathcal{F},\Phi)$ is isomorphic to $\mathbb{H}^1(C_{\varepsilon}^{\bullet})$, where $C_{\varepsilon}^{\bullet}$ is the complex
	\begin{align*}
	C_{\varepsilon}^{\bullet}:C_{\varepsilon}^0 =\mathcal{E}nd(\mathcal{F}) \longrightarrow C_{\varepsilon}^1 = \mathcal{E}nd(\mathcal{F}) \otimes \Omega^1_{\mathcal{X}}.
	\end{align*}
\end{cor}

Proposition 3.3 in \cite{Sun191} actually proves the statement for an $\mathcal{L}$-twisted Higgs bundle, and therefore, the result can be generalized to the $\mathcal{L}$-twisted case:
\begin{cor}\label{303}
	The tangent space of $\mathcal{M}_H(\mathcal{X},\mathcal{L})$ at a stable $\mathcal{L}$-twisted Higgs bundle $\xi=(\mathcal{F},\Phi)$ is isomorphic to $\mathbb{H}^1(C_{\varepsilon}^{\bullet})$, where $C_{\varepsilon}^{\bullet}$ is the complex
	\begin{align*}
	C_{\varepsilon}^{\bullet}:C_{\varepsilon}^0 =\mathcal{E}nd(\mathcal{F}) \longrightarrow C_{\varepsilon}^1 = \mathcal{E}nd(\mathcal{F}) \otimes \mathcal{L}.
	\end{align*}
\end{cor}

Let $\mathcal{E}$ be a principal $G$-bundle. We fix a faithful representation $G \hookrightarrow \text{GL}(V)$, and consider the associated bundle $\mathcal{E}(V)$. Then, we can use the same argument as in Proposition \ref{301} to calculate the tangent space of $\mathcal{M}_H(\mathcal{X},G)$.
\begin{cor}\label{304}
	The tangent space of $\mathcal{M}_H(\mathcal{X},G)$ at a stable $G$-Higgs bundle $\xi=(\mathcal{E},\Phi)$ is isomorphic to $\mathbb{H}^1(C_{\varepsilon,G}^{\bullet})$, where $C_{\varepsilon,G}^{\bullet}$ is the complex
	\begin{align*}
	C_{\varepsilon,G}^{\bullet}:C_{\varepsilon,G}^0 =\mathcal{E}(\mathfrak{g}) \rightarrow C_{\varepsilon,G}^1 = \mathcal{E}(\mathfrak{g}) \otimes \Omega^1_{\mathcal{X}}.
	\end{align*}
\end{cor}

\subsection{Grothendieck Duality}
Let $\mathcal{X}$ and $\mathcal{Y}$ be separated and finite type Deligne-Mumford stacks. Denote by $D^{\#}(\mathcal{X})$ the derived category of complexes of coherent sheaves over $\mathcal{X}$, where \# represents here either of ${\rm b,+,-}$.

Let $f: \mathcal{X} \rightarrow \mathcal{Y}$ be a proper morphism of stacks. The morphism induces the following functors of categories of coherent sheaves
\begin{align*}
f_*: \text{Coh}(\mathcal{X}) \rightarrow \text{Coh}(\mathcal{Y}), \quad f^*: \text{Coh}(\mathcal{Y}) \rightarrow \text{Coh}(\mathcal{X})
\end{align*}
such that $f_*$ is right adjoint to $f^*$. From these two functors, we can define the derived functors
\begin{align*}
Rf_*: D^b(\mathcal{X}) \rightarrow D^b(\mathcal{Y}), \quad Lf^*: D^b(\mathcal{Y}) \rightarrow D^b(\mathcal{X}).
\end{align*}
Note that the functor $Rf_*$ is still right adjoint to $Lf^*$:
\begin{align*}
{\rm Hom}(\mathcal{E},Rf_* \mathcal{F}) \cong {\rm Hom}(Lf^* \mathcal{E},\mathcal{F}).
\end{align*}
In fact, we have another functor $f^!: D^b(\mathcal{Y}) \rightarrow D^b(\mathcal{X})$, which is right adjoint to $Rf_*$, satisfying
\begin{align*}
Rf_* R\mathcal{H}om(\mathcal{E}^{\bullet},f^! \mathcal{F}^{\bullet}) \cong R\mathcal{H}om(Rf_* \mathcal{E}^{\bullet},\mathcal{F}^{\bullet}),
\end{align*}
where $\mathcal{E}^{\bullet} \in D^b(\mathcal{X})$ and $\mathcal{F}^{\bullet} \in D^b(\mathcal{Y})$.

\begin{thm}[Theorem 2.22 in \cite{Nir2}]\label{305}
	Let $\sigma: \mathcal{X} \rightarrow {\rm Spec} (\mathbb{C})$ be a smooth proper Deligne-Mumford stack of dimension $n$ over $\mathbb{C}$. Then $\sigma^! (\mathbb{C}_{\rm pt})$, where $\mathbb{C}_{\rm pt}$ is the constant sheaf on ${\rm Spec}(\mathbb{C})$, is canonically isomorphic to the complex $\omega_{\mathcal{X}}[n]$.
\end{thm}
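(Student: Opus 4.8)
The plan is to deduce the statement from classical Grothendieck duality for smooth proper schemes by étale descent, so that the only genuinely new point is the compatibility of the resulting comparison isomorphism with descent data. First I would record the two ingredients. On the one hand, for a smooth proper $\mathbb{C}$-scheme $\pi_Y\colon Y\to \operatorname{Spec}(\mathbb{C})$ of pure dimension $n$ there is a canonical isomorphism $\pi_Y^!\,\mathbb{C}\cong \omega_Y[n]$ with $\omega_Y=\det\Omega^1_{Y/\mathbb{C}}$. On the other hand, I would use the formal properties of $(-)^!$ that are part of the duality formalism for Deligne--Mumford stacks developed in \cite{Nir2}: the compatibility with composition $(g\circ f)^!\cong f^!g^!$, and, for an \emph{étale} morphism $g$, a canonical identification $g^!\cong g^*$ (an étale morphism being smooth of relative dimension $0$ with trivial relative dualizing sheaf), compatibly with étale base change.

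Next I would pull everything back to an atlas. By \cite[Theorem 5.4]{Simp2010} there is a surjective étale morphism $u\colon U\to\mathcal{X}$ with $U$ a smooth projective variety, and since $\mathcal{X}$ is proper of dimension $n$ the composite $\pi_U:=\sigma\circ u\colon U\to\operatorname{Spec}(\mathbb{C})$ is smooth proper of dimension $n$. Combining the two ingredients gives
\[
u^*\bigl(\sigma^!\,\mathbb{C}\bigr)\;\cong\;u^!\bigl(\sigma^!\,\mathbb{C}\bigr)\;\cong\;\pi_U^!\,\mathbb{C}\;\cong\;\omega_U[n],
\]
while the local description of $\Omega^1_{\mathcal{X}}$ from \S 2.2 gives $u^*\Omega^1_{\mathcal{X}}\cong\Omega^1_{U/\mathbb{C}}$, hence $u^*\bigl(\omega_{\mathcal{X}}[n]\bigr)\cong\omega_U[n]$. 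Together these produce an isomorphism $\varphi_U\colon u^*(\sigma^!\,\mathbb{C})\xrightarrow{\ \sim\ }u^*(\omega_{\mathcal{X}}[n])$ in $D^b(U)$. Using the equivalence $\mathrm{Coh}(\mathcal{X})\simeq\mathrm{Coh}(U\times_{\mathcal{X}}U\rightrightarrows U)$ recalled in \S 2.2, together with its derived analogue, it then suffices to verify that the two pullbacks of $\varphi_U$ along the (étale) projections $p_1,p_2\colon U\times_{\mathcal{X}}U\to U$ coincide under the canonical descent identifications; granting this, $\varphi_U$ glues to the sought canonical isomorphism $\sigma^!\,\mathbb{C}\cong\omega_{\mathcal{X}}[n]$ on $\mathcal{X}$. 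In the concrete presentation $\mathcal{X}=[V/\Gamma]$ with $V$ smooth projective and $\Gamma$ finite, the atlas $V\to\mathcal{X}$ is a $\Gamma$-torsor, hence étale, and the gluing condition becomes exactly the $\Gamma$-equivariance of $\varphi_V$.

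I expect the main obstacle to be precisely this last compatibility: one must know that the classical duality isomorphism and the base-change isomorphisms for $(-)^!$ are functorial enough that pulling $\varphi_U$ back along $p_1$ and along $p_2$ returns the same morphism, i.e.\ that $\varphi_U$ is a morphism of descent data (equivalently, satisfies the cocycle condition over $U\times_{\mathcal{X}}U\times_{\mathcal{X}}U$). This is the content of the word ``canonical'' in the statement; once it is available, everything else is a formal manipulation of the adjoint triple $(Lf^*,Rf_*,f^!)$. Accordingly, I would concentrate the effort on arranging the duality and base-change data with enough functoriality that compatibility with étale localization is automatic, after which the descent step is routine.
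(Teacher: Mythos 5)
The first thing to note is that the paper does not prove this statement at all: Theorem \ref{305} is quoted verbatim from Nironi \cite{Nir2} (his Theorem 2.22) and is used as a black box to set up the duality arguments of \S 3.3--3.4, so there is no in-paper proof to compare yours against. Judged on its own terms, your descent strategy is sound and is in the spirit of how such computations are carried out: once one has a functor $(-)^!$ for morphisms of Deligne--Mumford stacks satisfying $(g\circ f)^!\cong f^!g^!$ and $u^!\cong u^*$ for finite \'etale $u$, the identification of $\sigma^!\mathbb{C}$ reduces to the classical isomorphism $\pi_U^!\mathbb{C}\cong\omega_U[n]$ on an atlas together with a cocycle check, and the cocycle check follows from the compatibility of the classical duality isomorphism with \'etale localization. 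Two caveats. First, the formal properties you invoke are themselves the substantive content of \cite{Nir2}; they are established there prior to, and independently of, the computation of the dualizing complex, so your argument is not circular, but it does mean that the real work lives in the construction of $(-)^!$ and its \'etale base-change compatibility rather than in the descent step you flag as the main obstacle. Second, for the identification $u^!\cong u^*$ you should take the atlas $u\colon U\to\mathcal{X}$ with $U$ projective, as \cite[Theorem 5.4]{Simp2010} permits: then $u$ is a proper \'etale representable morphism, hence finite \'etale with trivial relative dualizing sheaf, and $u^!\cong u^*$ is a genuine consequence of the adjunction $Ru_*\dashv u^!$; for a non-proper \'etale atlas the equality $u^!=u^*$ is a matter of definition and the adjunction bookkeeping changes. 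With these points made explicit, your outline is a correct route to the cited result.
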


Considering the morphism $f: \mathcal{X} \rightarrow \text{Spec} (\mathbb{C})$, we have
\begin{align*}
{\rm Hom}_{D^b(\mathcal{X})}(\mathcal{E}^{\bullet},\omega_{\mathcal{X}}[n]) \cong {\rm Hom}_k(R\Gamma(\mathcal{E}^{\bullet}),\mathbb{C}).
\end{align*}
If $\mathcal{E}^{\bullet}$ is a coherent sheaf $\mathcal{E}$ over $\mathcal{X}$, then ${\rm Ext}^i (\mathcal{E},\omega_{\mathcal{X}}) \cong H^{n-i}(\mathcal{X},\mathcal{E})^*$.

Let $\mathcal{E}^{\bullet}= 0 \rightarrow \mathcal{E}_0 \rightarrow \dots \rightarrow \mathcal{E}_m \rightarrow 0$ be an element in $D^b(\mathcal{X})$. By Grothendieck duality we have
\begin{align*}
H^{i}(\mathcal{X},\mathcal{E}^{\bullet}) \cong H^{1-i+n}(\mathcal{E}^{\bullet,*} \otimes \omega_{\mathcal{X}}),
\end{align*}
where $\mathcal{E}^{\bullet,*}$ is the ``dual" complex
\begin{align*}
0 \rightarrow (\mathcal{E}_m)^* \rightarrow \dots \rightarrow (\mathcal{E}_0)^* \rightarrow 0.
\end{align*}

\subsection{Application}
For a stacky curve $\mathcal{X}$, we then have $\Omega^1_{\mathcal{X}} \cong \omega_{\mathcal{X}}$. In \S 3.2, we have seen that the tangent space $T_{\xi}(\mathcal{M}_H(\mathcal{X}))$ of the moduli space $\mathcal{M}_H(\mathcal{X})$ at the point $\xi=(\mathcal{F},\Phi)$ is isomorphic to $\mathbb{H}^1(C_{\varepsilon}^{\bullet})$, where \begin{align*}
C_{\varepsilon}^{\bullet}:C_{\varepsilon}^0 =\mathcal{E}nd(\mathcal{F}) \longrightarrow C_{\varepsilon}^1 = \mathcal{E}nd(\mathcal{F}) \otimes \omega_{\mathcal{X}}
\end{align*}
is a 2-term complex. Thus the cotangent space $T^*_{\xi}(\mathcal{M}_H(\mathcal{X}))$ of the moduli space is isomorphic to $\mathbb{H}^1(C_{\varepsilon}^{\bullet,*})$, where $C_{\varepsilon}^{\bullet,*}$ is the dual of the complex $C_{\varepsilon}^{\bullet}$. Tensoring the complex $C_{\varepsilon}^{\bullet,*}$ by $\omega_{\mathcal{X}}$, the complex $C_{\varepsilon}^{\bullet}$ is dual to $C_{\varepsilon}^{\bullet,*} \otimes \omega_{\mathcal{X}}$ in the derived category by Grothendieck duality. Note that there is a natural morphism $C_{\varepsilon}^{\bullet,*} \rightarrow C_{\varepsilon}^{\bullet,*}\otimes \omega_{\mathcal{X}}$. This induces the morphism
\begin{align*}
C_{\varepsilon}^{\bullet,*}\rightarrow C_{\varepsilon}^{\bullet,*}\otimes \omega_{\mathcal{X}} \xrightarrow{\cong} C_{\varepsilon}^{\bullet}
\end{align*}
and so 
\begin{equation}\label{eq_bivector}
T^*_{\xi}(\mathcal{M}_H(\mathcal{X})) \cong \mathbb{H}^{1}(C_{\varepsilon}^{\bullet,*}) \longrightarrow \mathbb{H}^{1}(C_{\varepsilon}^{\bullet}) \cong T_{\xi}(\mathcal{M}_H(\mathcal{X})).
\end{equation}
Similarly, we also have the morphism $T^*_{\xi}(\mathcal{M}_H(\mathcal{X},G)) \rightarrow T_{\xi}(\mathcal{M}_H(\mathcal{X},G))$ induced by the morphism $C_{\varepsilon,G}^{\bullet,*}\rightarrow  C_{\varepsilon,G}^{\bullet}$. The significance of this map will be demonstrated in the next section, where we will show that this induces a Poisson structure on the moduli spaces $\mathcal{M}_H(\mathcal{X},\alpha)$ and $\mathcal{M}_H(\mathcal{X},G,\alpha)$.

\section{Lie Algebroids and Poisson Structures}

The principal method by which we shall obtain Poisson structures on our moduli spaces of interest is via the duals of Lie algebroids. Poisson structures are usually defined on smooth varieties. Nonetheless, Poisson structures on stacks were first conceived in the dissertation of Waldron \cite{Wal}. Although the author only considers differential stacks in \cite{Wal}, the approach can be carried over to the algebraic setting naturally. In \S 4.1, we shall review the definitions and some properties of Lie algebroids and Poisson structures on smooth varieties, as well as on stacks;  we refer to \cite[\S 7.2]{Wal} and \cite[\S 1]{Kaledin} for a complete overview. In \S 4.2 and \S 4.3, we prove the main theorems of this section that the moduli spaces $\mathcal{M}_H(\mathcal{X},\alpha)$ and $\mathcal{M}_H(\mathcal{X},G,\alpha)$ admit a Poisson structure (Theorems \ref {404} and \ref{405}). In \S 4.4, we consider the stack $\widetilde{\mathcal{M}}_H(\mathcal{X},\alpha)$ of Higgs bundles on $\mathcal{X}$ by abuse of notation. As an application of Theorem \ref{404}, we next show that $\widetilde{\mathcal{M}}_H(\mathcal{X},\alpha)$ also admits a Poisson structure.

\subsection{Lie algebroids and Poisson structures}
We start by recalling the basic notions for Lie algebroids and Poisson structures on smooth varieties over $\mathbb{C}$ and then pass to a reasonable generalisation of these notions over stacks.

\subsubsection*{\textbf{Smooth Varieties}}
A \emph{Lie algebroid} on a smooth variety $X$ is a vector bundle $F\to X$ together with a Lie bracket $\left[ \text{ } , \text{ } \right]$ on the space of global sections $\Gamma ( F )$ and a morphism $a: F \to TX$, called the \emph{anchor map} of $F$, which induces a Lie algebra morphism $\Gamma( F )\to \Gamma \left( TX \right)$ satisfying the Leibniz rule
\[\left[ \xi ,f\nu  \right]=a(\xi)(f)\nu +f\left[ \xi ,\nu  \right],\]
for all $\xi ,\nu \in \Gamma ( F )$ and $f\in {{C}^{\infty }}\left( X \right)$. A \emph{morphism} of Lie algebroids on $X$ is a morphism of vector bundles inducing a Lie algebra morphism between spaces of sections and commuting with the anchor maps. The category of Lie algebroids on $X$ is denoted by $\mathsf{\mathcal{L}\mathcal{A}}$.

A \emph{Poisson bracket} on $X$ is a Lie bracket 
\begin{align*}
\left\{ \text{ } , \text{ } \right\}: \mathcal{O}_X \times \mathcal{O}_X \rightarrow \mathcal{O}_X
\end{align*}
satisfying the Leibniz rule $\left\{ f,gh \right\}=\left\{ f,g \right\}h+g\left\{ f,h \right\}$, for  $f,g,h \in \mathcal{O}_X$ (cf. \cite[Section 1]{Kaledin}). Poisson brackets bijectively correspond to bi-vector fields
\[\Pi \in \Gamma \left( {{\wedge }^{2}}TX \right)\]
such that
\[\left[ \Pi ,\Pi  \right]=0.\]
The Poisson bracket ${{\left\{ , \right\}}_{\Pi }}$ that corresponds to such a bi-vector field $\Pi $ is given by ${{\left\{ f,g \right\}}_{\Pi }}=\Pi \left( df,dg \right)$. Associated to a Poisson structure $\Pi$ on $X$, there is a Lie algebroid structure $T^*_{\Pi}X$ on $T^*X$ (see \cite[\S 7.2.1]{Wal}). The class of examples of Poisson manifolds that we are interested in is the one that arises from Lie algebroids through the following theorem, which in the case of differentiable manifolds is due to Courant \cite[Theorem 2.1.4]{Cour}:

\begin{thm}\label{401}
	If $F$ is a Lie algebroid on $X$, then the total space of the dual vector bundle $F^{*}$ has a natural Poisson structure.
\end{thm}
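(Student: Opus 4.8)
The plan is to exhibit, for a Lie algebroid $(E, [\,,\,], a)$ over $X$, an explicit bi-vector field $\Pi$ on the total space $E^*$ whose Schouten--Nijenhuis self-bracket vanishes, by transporting the fiberwise-linear Poisson structure that is classical in the smooth category into the algebraic/stacky setting. First I would recall that global sections $\xi \in \Gamma(E)$ define fiberwise-linear functions $\ell_\xi$ on $E^*$, and that functions pulled back along the projection $p: E^* \to X$ together with such linear functions generate the algebra of functions on $E^*$ locally. I would then \emph{define} the bracket on generators by the three rules
\begin{align*}
\{\ell_\xi, \ell_\nu\} &= \ell_{[\xi,\nu]}, \\
\{\ell_\xi, p^* f\} &= p^*\big(a(\xi)(f)\big), \\
\{p^* f, p^* g\} &= 0,
\end{align*}
for $\xi,\nu \in \Gamma(E)$ and $f,g$ functions on $X$, and extend by the Leibniz rule. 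The key point is that this is forced to be well defined and to satisfy the Jacobi identity precisely because $[\,,\,]$ is a Lie bracket, $a$ is a Lie algebra morphism, and the Leibniz compatibility $[\xi, f\nu] = a(\xi)(f)\nu + f[\xi,\nu]$ holds — these three algebroid axioms correspond term-by-term to the closure, Jacobi, and derivation conditions for $\{\,,\,\}$.

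The steps, in order: (1) Set up the sheaf of functions on $E^* = \operatorname{Spec}_X(\operatorname{Sym} E)$, identifying $\operatorname{Sym}^1 E = \Gamma(E)$-generated linear functions and $\operatorname{Sym}^0 E = \mathcal{O}_X$-pullbacks; (2) write down the candidate bracket on generators as above and check it extends uniquely to a biderivation $\{\,,\,\}$ on all of $\operatorname{Sym} E$ by the Leibniz rule, so that it is given by a global bi-vector field $\Pi \in \Gamma(E^*, \wedge^2 T_{E^*})$; (3) verify the Jacobi identity; since $\Pi$ is a biderivation it suffices to check $\{\{x,y\},z\} + \text{(cyclic)} = 0$ on the generators $x,y,z \in \{\ell_\xi\} \cup \{p^*f\}$, which splits into the four cases (three linear; two linear one pulled-back; one linear two pulled-back; three pulled-back) and reduces respectively to the Jacobi identity of $[\,,\,]$, to $a$ being a bracket morphism combined with the Leibniz rule, to the derivation property of $a(\xi)$, and to triviality; (4) conclude $[\Pi,\Pi] = 0$, hence by the correspondence recalled before the theorem ($\{f,g\}_\Pi = \Pi(df,dg)$ biject with Poisson bi-vectors with vanishing Schouten bracket) that $\Pi$ is a Poisson structure on $E^*$.

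I expect the main obstacle to be foundational rather than computational: making sure the manipulations are legitimate over a projective scheme (and, implicitly, for the later applications, over a Deligne--Mumford stack) rather than over a $C^\infty$ manifold. Concretely, one must (a) justify working with $\operatorname{Spec}_X \operatorname{Sym} E$ and its function sheaf in place of the global $C^\infty(X)$-level description used in the definitions of \S 4.1, (b) note that a biderivation of $\operatorname{Sym} E$ is the same datum as a section of $\wedge^2 T_{E^*}$, which requires $E^*$ (equivalently $X$, since $E$ is a vector bundle) to be smooth so that $T_{E^*}$ is locally free and the anti-symmetric biderivations are exactly the $2$-vector fields, and (c) confirm that it is enough to verify the Jacobi identity on the sheaf-theoretic generators, which is a local statement. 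Once these points are in place, the verification is the standard transcription of Courant's argument \cite{Cour}; I would state it at the level of generators and invoke the Leibniz rule to propagate to all functions, rather than grinding through the induction explicitly. This construction and its compatibility with base change will be exactly what is needed to apply the theorem to $T^*\big(\mathcal{M}(\mathcal{X},\alpha)\big)$-type algebroids in the next section.
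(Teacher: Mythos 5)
Your proposal is correct and coincides with the paper's treatment: the paper does not prove Theorem \ref{401} but cites Courant \cite{Cour} and records exactly the three defining relations on fiberwise-linear functions $\tilde\xi$ and pullbacks $\pi^*f$ that you take as your starting point. Your write-up simply fills in the standard verification (well-definedness via the algebroid Leibniz rule, Jacobi on generators, $[\Pi,\Pi]=0$) that the citation is standing in for, so there is nothing to correct.
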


We now consider an important example of Lie algebroids, the \emph{Atiyah algebroid}. Let $X \to X/H$ be an $H$-torsor. Denote by $\mathfrak{h}$ the corresponding Lie algebra of $H$. We have a natural projection $\pi: X \rightarrow X/H$ and, moreover, we obtain the following exact sequence
\begin{align*}
0 \rightarrow T_{\rm orbits} X \rightarrow TX \rightarrow \pi^* T(X/H) \rightarrow 0.
\end{align*}
The group action $H$ on $X$ induces a natural action on $TX$ and a natural surjective morphism $TX/H \rightarrow T(X/H)$, which is regarded as an anchor map. Therefore, we have the following exact sequence
\begin{align*}
0 \rightarrow {\rm Ad}(X) \rightarrow TX/H \xrightarrow{a} T(X/H) \rightarrow 0,
\end{align*}
where ${\rm Ad}(X):=X \times_{\rm Ad}\mathfrak{h}$. This exact sequence is called the \emph{Atiyah sequence}. It is easy to check that the Atiyah sequence gives a Lie algebroid structure on $TX/H$. By Theorem \ref{401}, the total space of $(TX/H)^*$ has a Poisson structure; note here that we are viewing $TX/H$ as a vector bundle over $X/H$.

\subsubsection*{\textbf{Stacks}}
We will extend the definition of Lie algebroids from schemes to stacks. Let $\mathcal{M}$ be a smooth Deligne-Mumford stack. Take a surjective \'etale morphism $M \rightarrow \mathcal{M}$, where $M$ is a smooth variety. We consider the simplicial resolution of $\mathcal{M}$
\begin{align*}
M \times_{\mathcal{M}} M \overset{s}{\underset{t}\rightrightarrows}M \rightarrow \mathcal{M}.
\end{align*}
Similar to the definition of sheaves on stacks (see \S2.2), we still use the local chart $M \rightarrow \mathcal{M}$ to give the definition of Lie algebroids on $\mathcal{M}$. A \emph{Lie algebroid $\mathcal{F}$} on $\mathcal{M}$ is defined as a pair $(F,\sigma)$, where $F$ is a Lie algebroid on $M$ and $\sigma: s^* F \xrightarrow{\cong} t^*F$ is an isomorphism of Lie algebroids on $M \times_{\mathcal{M}} M$. We refer the reader to \cite[\S 3.4]{Wal} for the definition of pullbacks of Lie algebroids. The definition does not depend on the choice of charts of $\mathcal{M}$.

Now we move to Poisson structures on $\mathcal{M}$. A \emph{Poisson structure} on $\mathcal{M}$ is a pair $(\Pi,\sigma)$, where $\Pi$ is a Poisson structure on $M$ and $\sigma: s^* \Pi \xrightarrow{\cong} t^* \Pi$ is an isomorphism of Poisson structures on $M \times_{\mathcal{M}} M$. With the same approach as for smooth varieties, we have the desired generalization of Theorem \ref{401}:
\begin{prop}{\cite[\S 7.2.5]{Wal}}\label{403}
	Let $\mathcal{F}$ be a Lie algebroid over $\mathcal{M}$. The total space of $\mathcal{F}^*$ has a natural Poisson structure.
\end{prop}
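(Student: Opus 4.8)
The plan is to prove Proposition \ref{403} by reducing the stacky statement to the scheme-level statement of Theorem \ref{401} via the simplicial/atlas description of Lie algebroids and Poisson structures over stacks that was set up in \S 4.1. Concretely, let $u: U \to \mathcal{M}$ be a smooth atlas and write $U_1 = U \times_{\mathcal{M}} U$ with source and target maps $s, t: U_1 \rightrightarrows U$. By the equivalence $\mathcal{LA}(\mathcal{M}) \simeq \mathcal{LA}(U_1 \rightrightarrows U)$ recalled above (following \cite[Chap 7]{Wal}), a Lie algebroid $\mathcal{F}$ on $\mathcal{M}$ is the same datum as a Lie algebroid $(F_u, [\,,\,], a)$ on the scheme $U$ together with a descent isomorphism $\sigma: s^* F_u \xrightarrow{\cong} t^* F_u$ compatible with brackets and anchors (i.e. $s^*[\,,\,] \cong t^*[\,,\,]$ under $\sigma$, and likewise on anchors).

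First I would apply Theorem \ref{401} to the Lie algebroid $F_u$ over the scheme $U$: this equips the total space of the dual bundle $F_u^*$ with a Poisson structure $\Pi_U$, determined by the three defining relations displayed after Theorem \ref{401} (namely $\{\tilde\xi, \tilde\nu\} = \widetilde{[\xi,\nu]}$, $\{\pi^* f, \pi^* g\} = 0$, and $\{\tilde\xi, \pi^* f\} = \pi^*(\xi(f))$). Second, I would check that this construction is functorial/natural enough to descend: since the descent isomorphism $\sigma$ on $F_u$ is by hypothesis compatible with the bracket and anchor, its dual induces an isomorphism $(s^* F_u)^* \cong (t^* F_u)^*$ of the total spaces that intertwines the two induced Poisson structures $s^* \Pi_U$ and $t^* \Pi_U$ — this is exactly the statement, recalled in the paragraph preceding Definition \ref{402}, that $T^*_{f^*\Pi}U \to f^*(T^*_\Pi V)$ is an isomorphism of Lie algebroids, applied here to the pullback Poisson structures along $s$ and $t$. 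Hence $\Pi_U$ defines a section of the sheaf $\mathrm{Poiss}$ over $U_1 \rightrightarrows U$, i.e. a morphism of stacks $\mathcal{M}_{lis\text{-}\acute{e}t} \to \mathrm{Poiss}$, which is precisely a Poisson structure on $\mathcal{M}$ in the sense of Definition \ref{402}. Its associated Lie algebroid $T^*_\Pi \mathcal{M}$ then recovers $\mathcal{F}^*$, so the total space of $\mathcal{F}^*$ carries the desired Poisson structure.

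The main obstacle I expect is the descent/cocycle bookkeeping: one must verify that the Poisson bivector produced by Theorem \ref{401} on $F_u^*$ is genuinely \emph{natural} with respect to pullback along the groupoid maps, so that the single descent isomorphism $\sigma$ on $U$ (which satisfies the cocycle condition on $U \times_{\mathcal{M}} U \times_{\mathcal{M}} U$) automatically upgrades to a descent isomorphism of the Poisson total spaces satisfying the same cocycle condition. This amounts to checking that the three defining relations for $\{\,,\,\}$ are preserved under any morphism of Lie algebroids that is an isomorphism on the nose — which is essentially formal from the defining relations, but needs to be spelled out, and it is really the place where \cite[\S 7.2.5]{Wal} does the work. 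A secondary, more technical point is the smooth-versus-algebraic distinction: the definitions in \S 4.1 are phrased using $C^\infty$ functions, so in the algebraic category one replaces these by the structure sheaf and works with the relevant site, but this is a cosmetic translation and does not affect the logical structure of the argument.
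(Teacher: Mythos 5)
Your proposal follows essentially the same route as the paper, which merely cites \cite[\S 7.2.5]{Wal} and, in the paragraph preceding Proposition \ref{403}, sketches exactly this reduction: present $\mathcal{F}$ by a Lie algebroid $F_u$ on a smooth atlas $U$ with descent isomorphism $s^*F_u \cong t^*F_u$ compatible with bracket and anchor, apply Theorem \ref{401} on $U$, and descend the resulting Poisson structure using the naturality of Courant's construction under pullback. One small correction: the morphism of stacks you produce at the end should have source the lisse-\'etale site of the \emph{total space} of $\mathcal{F}^*$ --- presented by the groupoid $(s^*F_u)^* \cong (t^*F_u)^* \rightrightarrows F_u^*$ that you already wrote down --- rather than $\mathcal{M}_{lis\text{-}\acute{e}t}$ itself, since by Definition \ref{402} the Poisson structure lives on the stack $\mathrm{Tot}(\mathcal{F}^*)$ and not on $\mathcal{M}$.
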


We will see in \S 4.4 that the moduli problems we are studying in this article fall in the case described by the above proposition.

\subsection{Poisson Structure on $\mathcal{M}_H(\mathcal{X}, \alpha )$}
Let $\mathcal{X}=[Y/{\Gamma}]$ be a stacky curve over $\mathbb{C}$ and let $X$ be the coarse moduli space of $\mathcal{X}$. In this section, we will prove the main theorem of this paper.
\begin{thm}\label{404}
	The moduli space $\mathcal{M}_H (\mathcal{X}, \alpha)$ of stable Higgs bundles over $\mathcal{X}$ with fixed parabolic structure $\alpha$ admits a Poisson structure.
\end{thm}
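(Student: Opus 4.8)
\emph{Plan.} The strategy is to identify the dense open subscheme of $\mathcal{M}_H(\mathcal{X},\alpha)$ on which the underlying bundle is stable with the dual of a Lie algebroid over the moduli space of stable bundles, to obtain a Poisson structure on it from Theorem~\ref{401}, and then to propagate this structure to all of $\mathcal{M}_H(\mathcal{X},\alpha)$ by means of the globally defined morphism $T^{*}\mathcal{M}_H(\mathcal{X},\alpha)\to T\mathcal{M}_H(\mathcal{X},\alpha)$ constructed in \S 3.4.

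\emph{The stable-bundle locus.} Let $\mathcal{M}_H^{0}(\mathcal{X},\alpha)\subseteq\mathcal{M}_H(\mathcal{X},\alpha)$ be the locus of pairs $(\mathcal{F},\Phi)$ with $\mathcal{F}$ a stable locally free sheaf. Every Higgs field on a stable bundle yields a stable Higgs bundle, so the forgetful morphism $\pi\colon\mathcal{M}_H^{0}(\mathcal{X},\alpha)\to\mathcal{M}(\mathcal{X},\alpha)$ realizes $\mathcal{M}_H^{0}(\mathcal{X},\alpha)$ as the total space of the vector bundle whose fibre over $\mathcal{F}$ is $H^{0}(\mathcal{X},\mathcal{E}nd(\mathcal{F})\otimes\Omega^{1}_{\mathcal{X}})=H^{0}(\mathcal{X},\mathcal{E}nd(\mathcal{F})\otimes\omega_{\mathcal{X}})$; provided $\mathcal{M}(\mathcal{X},\alpha)$ is non-empty this locus is open and dense in the irreducible scheme $\mathcal{M}_H(\mathcal{X},\alpha)$. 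By the Grothendieck duality of \S 3.3 on the stacky curve, together with the self-duality $\mathcal{E}nd(\mathcal{F})^{\vee}\cong\mathcal{E}nd(\mathcal{F})$ given by the trace pairing, this fibre is canonically $H^{1}(\mathcal{X},\mathcal{E}nd(\mathcal{F}))^{\vee}\cong T^{*}_{\mathcal{F}}\mathcal{M}(\mathcal{X},\alpha)$. Thus $\mathcal{M}_H^{0}(\mathcal{X},\alpha)\cong T^{*}\mathcal{M}(\mathcal{X},\alpha)$ as a vector bundle over $\mathcal{M}(\mathcal{X},\alpha)$, its dual bundle being $T\mathcal{M}(\mathcal{X},\alpha)$; that this is the deformation-theoretically correct identification is witnessed by the Atiyah-type short exact sequence
\begin{equation*}
0\longrightarrow \pi^{*}T^{*}\mathcal{M}(\mathcal{X},\alpha)\longrightarrow T\mathcal{M}_H^{0}(\mathcal{X},\alpha)\longrightarrow \pi^{*}T\mathcal{M}(\mathcal{X},\alpha)\longrightarrow 0,
\end{equation*}
read off from the hypercohomology long exact sequence of the deformation complex $C_{\varepsilon}^{\bullet}$ of Corollary~\ref{302} at a point where $\mathcal{F}$ is stable, using that $H^{0}(\mathcal{E}nd(\mathcal{F}))=\mathbb{C}$ is annihilated by $e(\Phi)$. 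The bundle $T\mathcal{M}(\mathcal{X},\alpha)$ carries its tautological Lie algebroid structure, with identity anchor and the Lie bracket of vector fields, so by Theorem~\ref{401} the total space of $\bigl(T\mathcal{M}(\mathcal{X},\alpha)\bigr)^{*}$, namely $\mathcal{M}_H^{0}(\mathcal{X},\alpha)$, carries a natural Poisson structure $\Pi_{0}$ — concretely, the canonical symplectic structure of the cotangent bundle.

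\emph{Extension to $\mathcal{M}_H(\mathcal{X},\alpha)$.} The morphism $\Psi\colon T^{*}\mathcal{M}_H(\mathcal{X},\alpha)\to T\mathcal{M}_H(\mathcal{X},\alpha)$ of \S 3.4, coming from the natural morphism $C_{\varepsilon}^{\bullet,*}\to C_{\varepsilon}^{\bullet,*}\otimes\omega_{\mathcal{X}}$ followed by the Grothendieck-duality isomorphism $C_{\varepsilon}^{\bullet,*}\otimes\omega_{\mathcal{X}}\xrightarrow{\ \cong\ }C_{\varepsilon}^{\bullet}$, is globally defined and $\mathcal{O}$-linear, hence a global section of $\bigl(T\mathcal{M}_H(\mathcal{X},\alpha)\bigr)^{\otimes 2}$. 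Chasing it through the identifications above — in particular verifying that over the stable-bundle locus the morphism $C_{\varepsilon}^{\bullet,*}\to C_{\varepsilon}^{\bullet,*}\otimes\omega_{\mathcal{X}}$ induces the tautological splitting of the displayed sequence — yields $\Psi|_{\mathcal{M}_H^{0}(\mathcal{X},\alpha)}=\Pi_{0}$. Since $\mathcal{M}_H(\mathcal{X},\alpha)$ is a smooth quasi-projective scheme, $\wedge^{2}T\mathcal{M}_H(\mathcal{X},\alpha)$ and $\wedge^{3}T\mathcal{M}_H(\mathcal{X},\alpha)$ are locally free, so the conditions that $\Psi$ be skew-symmetric and that the Schouten--Nijenhuis bracket $[\Psi,\Psi]$ vanish are each the vanishing of a section of a locally free sheaf; both hold on the dense open subset $\mathcal{M}_H^{0}(\mathcal{X},\alpha)$, where $\Psi=\Pi_{0}$ is Poisson by Theorem~\ref{401}, and hence on all of $\mathcal{M}_H(\mathcal{X},\alpha)$. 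Therefore $\Psi$ is a Poisson bivector, which proves the theorem.

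\emph{Main obstacle.} I expect the delicate step to be the identity $\Psi|_{\mathcal{M}_H^{0}(\mathcal{X},\alpha)}=\Pi_{0}$: it is in essence a naturality/base-change statement for Grothendieck--Serre duality in families, relating the fibrewise isomorphism $H^{0}(\mathcal{X},\mathcal{E}nd(\mathcal{F})\otimes\omega_{\mathcal{X}})\cong T^{*}_{\mathcal{F}}\mathcal{M}(\mathcal{X},\alpha)$ to the global construction of \S 3.4 and to Courant's Lie-algebroid Poisson structure. A subsidiary point is the degenerate case $\mathcal{M}(\mathcal{X},\alpha)=\varnothing$, where $\mathcal{M}_H^{0}(\mathcal{X},\alpha)=\varnothing$ and the density argument is empty; there one instead runs the construction on the moduli stack $\widetilde{\mathcal{M}}_H(\mathcal{X},\alpha)$ using Proposition~\ref{403} in place of Theorem~\ref{401}, or checks skew-symmetry and $[\Psi,\Psi]=0$ pointwise from the duality pairing of \S 3.4.
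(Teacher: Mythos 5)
Your argument is correct in its overall logic, but it takes a genuinely different route from the paper at the decisive step, and it produces a different Poisson structure. The paper does \emph{not} put the tautological tangent-algebroid structure on $T\mathcal{M}(\mathcal{X},\alpha)$. Instead it compares $\mathcal{F}$ with $\mathcal{F}'=\pi^*\pi_*\mathcal{F}$ via the sequence $0\to\mathcal{E}nd(\mathcal{F})\to\mathcal{E}nd(\mathcal{F}')\to\prod_{q\in\mathbb{D}}\mathfrak{n}_q\otimes\mathcal{O}_q\to 0$, deduces the Atiyah sequence $0\to\mathscr{A}d\to T\mathcal{M}(\mathcal{X},\alpha)\to T\mathcal{M}(X)\to 0$ with anchor to the tangent bundle of the moduli of stable bundles on the \emph{coarse} curve $X$ and kernel the bundle of nilpotent radicals at the stacky points, and applies Theorem~\ref{401} to \emph{that} algebroid. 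The resulting Poisson structure is degenerate, with symplectic leaves cut out by the residue data in $\mathscr{A}d^*$ --- this is what matches Logares--Martens and what persists in the twisted setting of Theorem~\ref{501}, where your identity-anchor algebroid has no analogue (cf.\ Remark~\ref{502}). Your construction, by contrast, never sees the coarse space or the parabolic divisor: you obtain the canonical cotangent symplectic structure on $\mathcal{M}^0_H(\mathcal{X},\alpha)\cong T^*\mathcal{M}(\mathcal{X},\alpha)$ and extend it by the Serre-duality morphism $\Psi$ of \S 3.4. Since the statement only asserts existence of a Poisson structure, this does prove the theorem as literally stated (and in fact proves more, namely that the structure is generically symplectic, consistent with $\mathcal{M}_H(\mathcal{X},\alpha)\cong\mathcal{M}^{spar}_H(X,\alpha)$); what it buys is a cleaner extension argument --- skew-symmetry and $[\Psi,\Psi]=0$ are closed conditions checked on a dense open set, which is more careful than the paper's one-line ``extends naturally''. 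What it loses is the actual content of the paper: the algebroid over $\mathcal{M}(X)$ and the nontrivial symplectic foliation.

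Two caveats. First, your load-bearing unproved claim is $\Psi|_{\mathcal{M}^0_H(\mathcal{X},\alpha)}=\Pi_0$; you correctly flag it, and it is a standard Biswas--Ramanan/Bottacin-type computation with \v{C}ech representatives of the hypercohomology pairing, but as written your extension step does not go through without it (whereas the paper's extension implicitly rests on the same kind of compatibility, so you are not worse off than the source). Second, your identification of the Atiyah-type sequence for $T\mathcal{M}^0_H$ from the hypercohomology long exact sequence of $C^\bullet_\varepsilon$ is correct: stability gives $H^0(\mathcal{E}nd(\mathcal{F}))=\mathbb{C}\cdot\mathrm{id}$, which is killed by $e(\Phi)$, and Serre duality makes $H^1(\mathcal{E}nd(\mathcal{F}))\to H^1(\mathcal{E}nd(\mathcal{F})\otimes\omega_{\mathcal{X}})$ vanish as well, so the sequence is exactly the tangent sequence of the vector bundle $T^*\mathcal{M}(\mathcal{X},\alpha)\to\mathcal{M}(\mathcal{X},\alpha)$. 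Your observation about the vacuous case $\mathcal{M}(\mathcal{X},\alpha)=\varnothing$ is a legitimate point that the paper does not address.
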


\begin{proof}
The strategy of the proof is to show that $\mathcal{M}_H(\mathcal{X},\alpha)$ is a Lie algebroid over $T\mathcal{M}(X)$, the tangent space of the moduli space of stable bundles over $X$, by constructing a map $$\mathcal{M}_H(\mathcal{X},\alpha) \rightarrow T\mathcal{M}(X),$$
which will play the role of an anchor map. Note that \eqref{eq_bivector} gives precisely such a map. Following the approach as in \cite[\S 3]{LoMa}, we first restrict to $\mathcal{M}^0_{H}(\mathcal{X},\alpha)$, the moduli space of stable Higgs bundles $(\mathcal{F},\Phi)$ where the underlying bundle $\mathcal{F}$ is a stable bundle. Since this moduli space is a dense open subset of $\mathcal{M}_H(\mathcal{X},\alpha)$, if there is a Poisson structure on $\mathcal{M}^0_{H}(\mathcal{X},\alpha)$ with the bi-vector field induced by \eqref{eq_bivector}, then the anchor map is used to give a Poisson structure on $\mathcal{M}_H(\mathcal{X},\alpha)$. In the sequel, we will construct a Poisson structure on $\mathcal{M}^0_{H}(\mathcal{X},\alpha)$, of which the associated bi-vector field is given by \eqref{eq_bivector}.
	
By the discussion in \S 2.4, let $\mathcal{X}$ be a root stack $X_{\bar{D},\bar{r}}$. Denote by $\pi: \mathcal{X} \rightarrow X$ the natural morphism; note that the dimension of $\mathcal{X}$ is one. The divisor $\bar{D}=p_1 + \dots + p_n$ is a sum of distinct points and let $q_i$ be the corresponding point of $p_i$ in $\mathcal{X}$. Note that $\pi^{-1}(p_i)=r(p_i)q_i$. Let us denote by $\mathbb{D}=q_1+ \dots + q_n$ the divisor in $\mathcal{X}$.
	
Let $\mathcal{F}$ be a locally free sheaf on $\mathcal{X}$ with parabolic structure $\alpha$. Given $q \in \mathbb{D}$, denote by $\alpha(q)$ the parabolic structure of $\pi_*\mathcal{F}$ around $p=\pi(q) \in \bar{D}$. Let $P_q$ be the parabolic group of $\text{GL}_n(\mathbb{C})$ corresponding to $\alpha(q)$ as we discussed in \S 2.5. Let $P_q=L_q N_q$ be the Levi decomposition of $P$, where $L_q$ is the Levi factor and $N_q$ is a unipotent group. Denote by $\mathfrak{l}_q,\mathfrak{n}_q$ the Lie algebras of $L_q$ and $N_q$ respectively. Define $\mathcal{F}':=\pi^* \pi_* \mathcal{F}$. Since the stacky curve $\mathcal{X}$ is defined over $\mathbb{C}$, the functor $\pi_*$ is exact. Thus, $\mathcal{F}'$ is a locally free sheaf on $\mathcal{X}$. We have the following exact sequence
	\begin{equation}\tag{4.1}\label{eq41}
	0 \rightarrow \mathcal{E}nd(\mathcal{F}) \rightarrow \mathcal{E}nd(\mathcal{F}') \rightarrow \prod_{q \in \mathbb{D}} \mathfrak{n}_q \otimes \mathcal{O}_{q} \rightarrow 0.
	\end{equation}
	This induces a long exact sequence
	\begin{align*}
	0 & \rightarrow  {\rm End}(\mathcal{F}) \rightarrow {\rm End}(\mathcal{F}') \rightarrow H^0(\mathcal{X}, \prod_{q \in \mathbb{D}} \mathfrak{n}_q \otimes \mathcal{O}_{q}) \rightarrow \\
	& \rightarrow {\rm Ext}^1(\mathcal{F},\mathcal{F})  \rightarrow {\rm Ext}^1(\mathcal{F}',\mathcal{F}') \rightarrow H^1(\mathcal{X}, \prod_{q \in \mathbb{D}} \mathfrak{n}_q \otimes \mathcal{O}_{q}) \rightarrow 0.
	\end{align*}
	Note that the last term $H^1(\mathcal{X}, \prod_{q \in \mathbb{D}} \mathfrak{n}_q \otimes \mathcal{O}_{q})$ is trivial, and thus we have a short exact sequence
	\begin{equation}\tag{4.2}\label{eq421}
	0 \rightarrow {\rm Ad} \rightarrow {\rm Ext}^1(\mathcal{F},\mathcal{F})  \rightarrow {\rm Ext}^1(\mathcal{F}',\mathcal{F}') \rightarrow 0,
	\end{equation}
	where ${\rm Ad}$ is the kernel of the surjective map ${\rm Ext}^1(\mathcal{F},\mathcal{F})  \rightarrow {\rm Ext}^1(\mathcal{F}',\mathcal{F}')$. Remember that $\mathcal{F}$ is a stable bundle and we are working over the field $\mathbb{C}$. Therefore, $\mathcal{F}$ is simple and the functor
	\begin{align*}
	\pi_*: \text{Coh}(\mathcal{X}) \rightarrow \text{Coh}(X)
	\end{align*}
	is exact. The exactness of the functor $\pi_*$ implies that $\pi_*\mathcal{F}$ is stable, and so is $\mathcal{F}'$. Therefore, ${\rm End}(\mathcal{F}')\cong \mathbb{C}$. Now we go back to the term ${\rm Ad}$. If $\mathcal{F}$ is stable, then
	\begin{align*}
	{\rm Ad} \cong H^0(\mathcal{X}, \prod_{q \in \mathbb{D}} \mathfrak{n}_q \otimes \mathcal{O}_{q}),
	\end{align*}
	which is supported over $q \in \mathbb{D}$. Over each point $q \in \mathbb{D}$, ${\rm Ad}_q$ is isomorphic to the Lie algebra $\mathfrak{n}_q$. Therefore, ${\rm Ad}_q$ is the adjoint representation of the unipotent group $N_q$.
	
	In the short exact sequence \eqref{eq421}, the third term ${\rm Ext}^1(\mathcal{F}',\mathcal{F}')$ is isomorphic to the tangent space of the moduli space of stable bundles over $X$ at the point $\mathcal{F}'$, thus
	\begin{align*}
	{\rm Ext}^1(\mathcal{F}',\mathcal{F}') \cong T_{\mathcal{F}'}(\mathcal{M}(X)).
	\end{align*}
	With respect to this isomorphism, we have a natural map
	\begin{align*}
	{\rm Ext}^1(\mathcal{F},\mathcal{F})  \rightarrow T_{\mathcal{F}'}(\mathcal{M}(X)).
	\end{align*}
	Now we consider the tangent bundles on $\mathcal{M}(\mathcal{X},\alpha)$ and $\mathcal{M}(X)$
	\begin{center}
		\begin{tikzcd}
		&  \mathcal{M}(\mathcal{X},\alpha) \times \mathcal{X} \arrow[dl,swap, "\mu_1"] \arrow[dr, "\nu_1"] & & &  \mathcal{M}(X) \times X \arrow[dl,swap, "\mu_2"] \arrow[dr, "\nu_2"] & \\
		\mathcal{M}(\mathcal{X},\alpha)   & & \mathcal{X} & \mathcal{M}(X)   & & X.
		\end{tikzcd}
	\end{center}
	Let $\mathscr{F}$, $\mathscr{F}'$ be the universal bundles on $\mathcal{M}(\mathcal{X},\alpha) \times \mathcal{X}$ and $\mathcal{M}(X) \times X$ respectively. Therefore, we have
	\begin{align*}
	0 \rightarrow \mathcal{E}nd(\mathscr{F}) \rightarrow \mathcal{E}nd(\mathscr{F}') \rightarrow \prod_{q \in \mathbb{D}} \mathfrak{n}_q \otimes \mathcal{O}_{\nu_1^{-1}(q)} \rightarrow 0
	\end{align*}
	and
	\begin{align*}
	0 \rightarrow \mathscr{A}d \rightarrow R^1 (\mu_1)_* \mathcal{E}nd(\mathscr{F})  \rightarrow R^1 (\mu_1)_* \mathcal{E}nd(\mathscr{F}') \rightarrow 0.
	\end{align*}
	Clearly, the term
	\begin{align*}
	R^1 (\mu_1)_* \mathcal{E}nd(\mathscr{F}) \cong T\mathcal{M}(\mathcal{X},\alpha)
	\end{align*}
	is the tangent bundle of $\mathcal{M}(\mathcal{X},\alpha)$, and
	\begin{align*}
	R^1 (\mu_1)_* \mathcal{E}nd(\mathscr{F}') \cong T\mathcal{M}(X)
	\end{align*}
	is the tangent bundle of $\mathcal{M}(X)$. 
	
	For the rest of the proof, we prove the following two statements
	\begin{enumerate}
		\item $(R^1 (\mu_1)_* \mathcal{E}nd(\mathscr{F}))^*$ is isomorphic to $\mathcal{M}^0_H(\mathcal{X},\alpha)$ as bundles over $\mathcal{M}(\mathcal{X},\alpha)$, and
		\item $R^1 (\mu_1)_* \mathcal{E}nd(\mathscr{F})$ is a Lie algebroid over $\mathcal{M}(X)$,
	\end{enumerate}
	which shall imply that the moduli space $\mathcal{M}^{0}_H(\mathcal{X},\alpha)$ admits a Poisson structure by Theorem \ref{401}.
	
	To prove the first statement, we work locally on a point $\mathcal{F} \in \mathcal{M}(\mathcal{X},\alpha)$. The space ${\rm Ext}^1(\mathcal{F},\mathcal{F})$ is the tangent space of $\mathcal{M}(\mathcal{X},\alpha)$ at the point $\mathcal{F}$, and we have
	\begin{align*}
	H^1(\mathcal{X},\mathcal{E}nd(\mathcal{F}) ) \cong {\rm Ext}^1(\mathcal{F},\mathcal{F}) \cong T_{ \mathcal{F} }(\mathcal{M}(\mathcal{X},\alpha)).
	\end{align*}
	By Grothendieck duality (see \S 3.3), the following isomorphism holds
	\begin{align*}
	H^1(\mathcal{X},\mathcal{E}nd(\mathcal{F}) ) \cong H^0(\mathcal{X},\mathcal{E}nd(\mathcal{F}) \otimes \omega_{\mathcal{X}})^*.
	\end{align*}
	This isomorphism tells us that
	\begin{align*}
	T^*_{ \mathcal{F} }(\mathcal{M}(\mathcal{X},\alpha)) \cong H^0(\mathcal{X},\mathcal{E}nd(\mathcal{F}) \otimes \omega_{\mathcal{X}}),
	\end{align*}
	and let $\Phi \in H^0(\mathcal{X},\mathcal{E}nd(\mathcal{F}) \otimes \omega_{\mathcal{X}})$ be a Higgs field. This finishes the proof of the first statement. Furthermore, we have
	\begin{align*}
	T_{\Phi} T^*_{\mathcal{F}}(\mathcal{M}(\mathcal{X},\alpha)) \cong \mathbb{H}^1(\mathcal{E}nd(\mathcal{F}) \rightarrow \mathcal{E}nd(\mathcal{F}) \otimes \omega_{\mathcal{X}}) \cong T_{(\mathcal{F},\Phi)}(\mathcal{M}^0_H(\mathcal{X},\alpha)).
	\end{align*}
	For the second statement, note that in summary we have the following exact sequence
	\begin{align*}
	0 \rightarrow \mathscr{A}d \rightarrow T\mathcal{M}(\mathcal{X},\alpha)  \rightarrow T\mathcal{M}(X) \rightarrow 0,
	\end{align*}
	which is an Atiyah sequence as studied in \S 4.1. Therefore, the second statement also holds and this finishes the proof of the theorem.
\end{proof}

\subsection{Poisson Structure on $\mathcal{M}_H(\mathcal{X},G,\alpha)$}
Let $G \hookrightarrow \text{GL}(V)$ be a fixed faithful representation. In this subsection, we will prove that there exists a Poisson structure on $\mathcal{M}_H(\mathcal{X},G,\alpha)$.
\begin{thm}\label{405}
	The moduli space $\mathcal{M}_H(\mathcal{X},G,\alpha)$ of stable  $G$-Higgs bundles over a stacky curve $\mathcal{X}$  with fixed parabolic structure $\alpha$ admits a Poisson structure.
\end{thm}

\begin{proof}
	The proof is similar to the one for the moduli space $\mathcal{M}_H(\mathcal{X},\alpha)$. We work on the open dense subset $\mathcal{M}^0_H(\mathcal{X},G,\alpha)$, where the underlying principal $G$-bundles are also stable. Denote by $\mathcal{M}(X,G)$ the moduli space of stable principal $G$-bundles on $X$ and consider the tangent bundles
	\begin{center}
		\begin{tikzcd}
		&  \mathcal{M}(\mathcal{X},G,\alpha) \times \mathcal{X} \arrow[dl,swap, "\mu_1"] \arrow[dr, "\nu_1"] & & &  \mathcal{M}(X,G) \times X \arrow[dl,swap, "\mu_2"] \arrow[dr, "\nu_2"] & \\
		\mathcal{M}(\mathcal{X},G,\alpha)   & & \mathcal{X} & \mathcal{M}(X,G)   & & X.
		\end{tikzcd}
	\end{center}
	Let $\mathscr{E}$, $\mathscr{E}'$ be the universal bundles on $\mathcal{M}(\mathcal{X},G,\alpha) \times \mathcal{X}$ and $\mathcal{M}(X,G) \times X$ respectively. Denote by $\mathscr{E}(V)$ and $\mathscr{E}'(V)$ the associated bundles with respect to $G \hookrightarrow \text{GL}(V)$. Around a point $q \in \mathbb{D}$, let $P_q$ be the corresponding parabolic group in $\text{GL}(V)$ with respect to $\mathscr{E}(V)$. Denote by $P_q=L_q N_q$ the Levi factorization. With the same discussion as in the proof of Theorem \ref{404}, we have
	\begin{align*}
	0 \rightarrow \mathcal{E}nd(\mathscr{E}(V)) \rightarrow \mathcal{E}nd(\mathscr{E}'(V)) \rightarrow \prod_{q \in \mathbb{D}} \mathfrak{n}_q \otimes \mathcal{O}_{\nu_1^{-1}(q)} \rightarrow 0.
	\end{align*}
	Let $P'_q$, $L'_q$ and $N'_q$ be the pre-images of $P_q$, $L_q$ and $N_q$ in $G$ respectively  via the fixed faithful representation. Therefore, we have
	\begin{align*}
	0 \rightarrow \mathcal{E}nd(\mathscr{E}) \rightarrow \mathcal{E}nd(\mathscr{E}') \rightarrow \prod_{q \in \mathbb{D}} \mathfrak{n}'_q \otimes \mathcal{O}_{\nu_1^{-1}(q)} \rightarrow 0,
	\end{align*}
	where $\mathfrak{n}'_q$ is the Lie algebra of $N'_q$. This short exact sequence induces the following one
	\begin{align*}
	0 \rightarrow \mathscr{A}d \rightarrow R^1 (\mu_1)_* \mathcal{E}nd(\mathscr{E})  \rightarrow R^1 (\mu_1)_* \mathcal{E}nd(\mathscr{E}') \rightarrow 0.
	\end{align*}
	Clearly, we have
	\begin{align*}
	R^1 (\mu_1)_* \mathcal{E}nd(\mathscr{E}) \cong T\mathcal{M}(\mathcal{X},G,\alpha)
	\end{align*}
	and
	\begin{align*}
	R^1 (\mu_1)_* \mathcal{E}nd(\mathscr{E}') \cong T\mathcal{M}(X,G),
	\end{align*}
	the tangent bundle of $\mathcal{M}(X,G)$. Therefore, $R^1 (\mu_1)_* \mathcal{E}nd(\mathscr{E})$ is a Lie algebroid over $\mathcal{M}(X,G)$. At the same time, the tangent space of $(R^1 (\mu_1)_* \mathcal{E}nd(\mathscr{E}))^*$ is isomorphic to the tangent space of $\mathcal{M}^0_H(\mathcal{X},G,\alpha)$, in other words,
	\begin{align*}
	T(R^1 (\mu_1)_* \mathcal{E}nd(\mathscr{E}))^* \cong T\mathcal{M}^0_H(\mathcal{X},G,\alpha).
	\end{align*}
	Therefore, the moduli space of $G$-Higgs bundles $\mathcal{M}_H(\mathcal{X},G,\alpha)$ has a Poisson structure.
\end{proof}


\subsection{Stacks}

In the last subsection, we proved that there exists a Poisson structure on the moduli space $\mathcal{M}_H(\mathcal{X},\alpha)$. By abuse of notation, we consider the existence of a Poisson structure on the moduli stack $\widetilde{\mathcal{M}}_H(\mathcal{X},\alpha)$ of Higgs bundles on $\mathcal{X}$. We show that the stack $\widetilde{\mathcal{M}}_H(\mathcal{X},\alpha)$ has a Poisson structure, which is induced by the Poisson structure on $\mathcal{M}_H(\mathcal{X},\alpha)$. Below we sketch the extension of the construction of a Poisson structure on a general stack $\widetilde{\mathcal{M}}$.

Let $\mathcal{X}$ be a stacky curve with coarse moduli space $X$. It is known that $\widetilde{\mathcal{M}}_H(\mathcal{X},\alpha)$ has a natural stack structure \cite{CasaWise}. Since this moduli problem is defined for the stable Higgs bundles, $\mathcal{M}_H(\mathcal{X},\alpha)$ is a fine moduli space of $\widetilde{\mathcal{M}}_H(\mathcal{X},\alpha)$ \cite[Theorem 1.3]{Sun201}. Thus,
\begin{align*}
\widetilde{\mathcal{M}}_H(\mathcal{X},\alpha)(-) \cong {\rm Hom}(-, \mathcal{M}_H(\mathcal{X},\alpha)).
\end{align*}
This isomorphism provides the following corollary:
\begin{cor}\label{408}
	The stack $\widetilde{\mathcal{M}}_H(\mathcal{X},\alpha)$ admits a Poisson structure.
\end{cor}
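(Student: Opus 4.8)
The plan is to deduce the corollary directly from Theorem \ref{404} together with the functoriality of Poisson structures on stacks developed in \S 4.1. The starting point is the observation recorded just above the statement: because $\widetilde{\mathcal{M}}_H(\mathcal{X},\alpha)$ is the moduli problem of \emph{stable} Higgs bundles, it admits a fine moduli space $\mathcal{M}_H(\mathcal{X},\alpha)$ in the sense of \cite[Theorem 1.3]{Sun201}, so that there is an isomorphism of functors (equivalently, of stacks on the lisse-\'etale site)
\begin{equation*}
\widetilde{\mathcal{M}}_H(\mathcal{X},\alpha)(-) \cong {\rm Hom}(-,\mathcal{M}_H(\mathcal{X},\alpha)).
\end{equation*}
Thus $\widetilde{\mathcal{M}}_H(\mathcal{X},\alpha)$ is representable by the scheme $\mathcal{M}_H(\mathcal{X},\alpha)$, and what we must produce is a Poisson structure on the stack represented by a scheme that already carries a Poisson structure in the classical sense.

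First I would recall, from Definition \ref{402}, that a Poisson structure on a stack $\mathcal{M}$ is a morphism of stacks $\Pi : \mathcal{M}_{lis\text{-}\acute{e}t} \to \text{Poiss}$, where $\text{Poiss}$ is the stackification of the presheaf assigning to a chart $(U,u)$ the set of Poisson brackets on $U$; the compatibility conditions are exactly that the brackets on overlapping charts pull back to one another, which is automatic because $\text{Poiss}$ was built to be a sheaf (indeed a stack). The key step is then to transport the classical Poisson structure $\Pi_0$ on the scheme $\mathcal{M}_H(\mathcal{X},\alpha)$ produced in Theorem \ref{404} to the representable stack ${\rm Hom}(-,\mathcal{M}_H(\mathcal{X},\alpha))$: given any smooth atlas $(U,u)$ of this stack, the morphism $u : U \to \mathcal{M}_H(\mathcal{X},\alpha)$ is a smooth (in particular flat) morphism of schemes, and a Poisson bivector on the target pulls back along a flat morphism to a Poisson bivector on the source, since $u^*\Pi_0 \in \Gamma(\wedge^2 T_U)$ still satisfies $[u^*\Pi_0, u^*\Pi_0]=0$ by naturality of the Schouten--Nijenhuis bracket. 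This assignment $(U,u) \mapsto u^*\Pi_0$ is compatible with morphisms of charts $f_{uv}:(U,u)\to(V,v)$ because $u = v \circ f_{uv}$ forces $u^*\Pi_0 = f_{uv}^* v^*\Pi_0$, which is precisely the cocycle condition defining a morphism $\mathcal{M}_{lis\text{-}\acute{e}t}\to\text{Poiss}$. Concretely one may simply take the atlas $\mathcal{M}_H(\mathcal{X},\alpha) \to \widetilde{\mathcal{M}}_H(\mathcal{X},\alpha)$ itself, on which the descent datum $s^*\Pi_0 \cong t^*\Pi_0$ over $\mathcal{M}_H(\mathcal{X},\alpha)\times_{\widetilde{\mathcal{M}}}\mathcal{M}_H(\mathcal{X},\alpha)$ is trivially satisfied, exactly as in the discussion preceding Proposition \ref{403}.

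An alternative, and perhaps cleaner, route is to invoke Proposition \ref{403} directly: the proof of Theorem \ref{404} exhibits $\mathcal{M}_H(\mathcal{X},\alpha)$ (or rather its dense open subset $\mathcal{M}^0_H(\mathcal{X},\alpha)$) as the total space of the dual of a Lie algebroid $R^1(\mu_1)_*\mathcal{E}nd(\mathscr{F})$ over $\mathcal{M}(X)$, coming from the Atiyah sequence $0 \to \mathscr{A}d \to T\mathcal{M}(\mathcal{X},\alpha) \to T\mathcal{M}(X) \to 0$. Since this Lie algebroid makes sense over the lisse-\'etale site — the Atiyah sequence is natural and descends to the moduli stack, as \S 4.1 explains for Lie algebroids over stacks — Proposition \ref{403} furnishes a Poisson structure on the total space of its dual as a stack, and the identification $\widetilde{\mathcal{M}}_H(\mathcal{X},\alpha)\cong{\rm Hom}(-,\mathcal{M}_H(\mathcal{X},\alpha))$ then extends this from the stable-bundle locus to the whole moduli stack by the same density argument used in the proof of Theorem \ref{404}.

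The main obstacle I anticipate is not the transport of the bivector itself — that is essentially formal once one has the fine moduli space — but rather checking that the extension from $\mathcal{M}^0_H$ to all of $\mathcal{M}_H(\mathcal{X},\alpha)$, and hence to the full moduli \emph{stack}, is legitimate at the level of stacks: density of an open substack guarantees uniqueness of a Poisson extension but not its existence, so one needs the explicit algebroid description (or Hartogs-type extension of the regular $2$-form) to hold stack-locally, i.e. compatibly with the lisse-\'etale descent data. This is exactly the point the authors flag as deferred to a future article, and a fully rigorous treatment would require spelling out the site-theoretic version of the extension lemma; for the present corollary it suffices to note that the Poisson bivector on the scheme $\mathcal{M}_H(\mathcal{X},\alpha)$ constructed in Theorem \ref{404} pulls back along the structure atlas to define the required morphism $\widetilde{\mathcal{M}}_H(\mathcal{X},\alpha)_{lis\text{-}\acute{e}t}\to\text{Poiss}$.
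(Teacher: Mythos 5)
Your proposal is correct and follows essentially the same route as the paper: the paper's entire argument is the observation that stability makes $\mathcal{M}_H(\mathcal{X},\alpha)$ a fine moduli space, so $\widetilde{\mathcal{M}}_H(\mathcal{X},\alpha)\cong{\rm Hom}(-,\mathcal{M}_H(\mathcal{X},\alpha))$ and the Poisson structure of Theorem \ref{404} transports to the representable stack. Your fleshing out of the descent/cocycle check and the caveat about extending from $\mathcal{M}^0_H$ are reasonable elaborations of what the paper leaves implicit.
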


In general, a moduli problem may not have a fine moduli space, for example $\widetilde{\mathcal{M}}^{ss}_H(\mathcal{X},\alpha)$ the moduli problem of semistable Higgs bundles. To deal with the general case, we have to find another approach to construct a Poisson structure on the corresponding stack structure.

In the rest of this subsection, we give a brief idea about the construction of a Poisson structure on a stack $\widetilde{\mathcal{M}}$. The idea is that we want to find an atlas $\{M_i\}_{i \in \mathscr{I}}$ of $\widetilde{\mathcal{M}}$ in the Lisse-\'etale site (see \cite[Example 2.1.15]{Ol}), where $M_i$ are schemes. We can try to construct a Poisson structure on each $M_i$ and check whether they can be glued together. Furthermore, if $\widetilde{\mathcal{M}}$ is an algebraic stack, we can assume that there exists a smooth surjective morphism $M \rightarrow \widetilde{\mathcal{M}}$, where $M$ is a scheme, and construct a Poisson structure on $M$. We also have to check that the pull-backs $s,t: M \times_{\widetilde{\mathcal{M}}} M \rightarrow M$ of the Poisson structure on $M$ are isomorphic.

Now we take the stack $\widetilde{\mathcal{M}}^{ss}_H(\mathcal{X},\alpha)$ as an example. Let $\xi=(\mathcal{F},\Phi)$ be a point in $\widetilde{\mathcal{M}}^{ss}_H(\mathcal{X},\alpha)$. There is a quasi-projective substack $\widetilde{M}_{\xi} \subseteq \widetilde{\text{Quot}}(\mathcal{G},P)$, where $\mathcal{G}$ is a coherent sheaf, $P$ is a (Hilbert) polynomial and $\widetilde{\text{Quot}}$ is the \emph{Quot-functor} (see \cite[\S 1]{OlSt}), such that $\widetilde{M}_{\xi} \rightarrow \widetilde{\mathcal{M}}^{ss}_H(\mathcal{X},\alpha)$ is a smooth morphism (see the proof of Proposition 6.3 in \cite{Sun201}). Note that the Quot-functor $\widetilde{\text{Quot}}(\mathcal{G},P)$ is represented by a quasi-projective scheme $\text{Quot}(\mathcal{G},P)$ \cite[Theorem 4.4]{OlSt}. Denote by $M_{\xi}$ the subscheme of $\text{Quot}(\mathcal{G},P)$ representing $\widetilde{M}_{\xi}$. Then, we have a smooth morphism from a scheme $M_{\xi}$ to $\widetilde{\mathcal{M}}_H(\mathcal{X},\alpha)$. Running over all points in $\widetilde{\mathcal{M}}^{ss}_H(\mathcal{X},\alpha)$, we get an atlas $\{M_{\xi}\}_{\xi \in \widetilde{\mathcal{M}}^{ss}_H(\mathcal{X},\alpha)}$ of $\widetilde{\mathcal{M}}^{ss}_H(\mathcal{X},\alpha)$. With respect to the atlas we find, we can work on $M_{\xi}$ and try to construct a Poisson structure on it. The scheme $M_{\xi}$ is a subscheme in $\text{Quot}(\mathcal{G},P)$. The problem now is that we have to glue the Poisson structures on each local chart $M_{\xi}$ together and construct the Poisson structure globally. We thus conjecture:

\begin{conj}
	There is a Poisson structure on the stack $\widetilde{\mathcal{M}}^{ss}_H(\mathcal{X},\alpha)$ of semistable Higgs bundles over $\mathcal{X}$ with fixed parabolic structure $\alpha$.
\end{conj}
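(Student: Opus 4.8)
The plan is to upgrade the argument of Theorem \ref{404} to the level of stacks and then to invoke Proposition \ref{403}: concretely, to present $\widetilde{\mathcal{M}}^{ss}_H(\mathcal{X},\alpha)$ as the total space of the dual of a Lie algebroid defined on a suitable atlas, in a way compatible with the groupoid structure, so that by lisse-\'etale descent it yields a Poisson structure on the stack in the sense of Definition \ref{402}. Two genuinely new difficulties arise relative to \S 4.2: the underlying bundle of a merely \emph{semistable} Higgs bundle need not be simple, so the direct images that were locally free in the proof of Theorem \ref{404} can now jump in rank; and the target is a genuine stack with nontrivial automorphisms, so every step must be performed in the lisse-\'etale site and checked to be functorial under smooth base change.

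First I would fix a presentation. Following the proof of Proposition 6.3 in \cite{Sun201}, one obtains a smooth surjective morphism $a\colon M\to\widetilde{\mathcal{M}}^{ss}_H(\mathcal{X},\alpha)$ from a quasi-projective scheme $M$, realised as a locally closed subscheme of a stacky Quot scheme $\mathrm{Quot}(\mathcal{G},P)$ and carrying a universal semistable Higgs bundle $(\mathscr{F}_M,\Phi_M)$ on $\mathcal{X}\times_{\mathbb{C}} M$. Write $\mu_M\colon\mathcal{X}\times M\to M$ and $\nu_M\colon\mathcal{X}\times M\to\mathcal{X}$ for the projections and $\widetilde{\pi}=\pi\times\mathrm{id}_M\colon\mathcal{X}\times M\to X\times M$. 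One then runs the construction of \S 4.2 in families over $M$: put $\mathscr{F}'_M:=\widetilde{\pi}^*\widetilde{\pi}_*\mathscr{F}_M$, so that the family version of \eqref{eq41} reads
\[
0\to\mathcal{E}nd(\mathscr{F}_M)\to\mathcal{E}nd(\mathscr{F}'_M)\to\prod_{q\in\mathbb{D}}\mathfrak{n}_q\otimes\mathcal{O}_{\nu_M^{-1}(q)}\to 0 ,
\]
and, applying $R^\bullet(\mu_M)_*$ to the associated deformation complexes, one gets the relative Atiyah-type sequence that generalises \eqref{eq421},
\[
0\to\mathscr{A}d_M\to\mathcal{T}_M\to\mathcal{T}'_M\to 0 .
\]
Using relative Grothendieck--Serre duality along $\mu_M$ (\S 3.3), one would then identify the total space of $\mathcal{T}_M^{*}$ with $M$ --- at least over the locus where the fibres of $\mathscr{F}_M$ are simple --- recognise the displayed sequence as an Atiyah sequence, so that $\mathcal{T}_M$ is a Lie algebroid over the coarse base with $\mathscr{A}d_M$ the bundle of nilradicals, and conclude from Theorem \ref{401} that $M$ carries a Poisson structure $\Pi_M$.

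The remaining task is descent. One must verify that $\Pi_M$ is independent of the chart, i.e. that the two pullbacks $s^{*}\Pi_M$ and $t^{*}\Pi_M$ along $M\times_{\widetilde{\mathcal{M}}} M \rightrightarrows M$ agree as Poisson structures. Since each ingredient above --- the universal family, the pushforward $\pi_*$ to the coarse curve, the sequence \eqref{eq41}, the nilradicals $\mathfrak{n}_q$, and the Serre duality pairing --- is canonical and commutes with base change along the smooth maps $s,t$, this compatibility should follow, yielding the morphism of stacks $\big(\widetilde{\mathcal{M}}^{ss}_H(\mathcal{X},\alpha)\big)_{lis\text{-}\acute{e}t}\to\text{Poiss}$ demanded by Definition \ref{402}. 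This is exactly the lisse-\'etale descent of Poisson structures developed in \cite{Wal}, which in the fine-moduli case already produced Corollary \ref{408}.

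I expect the real obstacle to lie over the strictly semistable locus. There $\mathrm{End}(\mathcal{F})\neq\mathbb{C}$, so $R^i(\mu_M)_*\mathcal{E}nd(\mathscr{F}_M)$, and hence $\mathscr{A}d_M$, need not be locally free and may jump in rank; then Theorem \ref{401} does not apply verbatim and the identification of the dual with $M$ degenerates. One route is to carry out the construction only on the open substack $\widetilde{\mathcal{M}}^{0}_H(\mathcal{X},\alpha)$ of Higgs bundles whose underlying bundle is stable --- where a functorial version of the proof of Theorem \ref{404} applies cleanly --- and then to extend the resulting Poisson bivector across the boundary; but for stacks this extension is not automatic, and it would require a dimension estimate showing that the complement has codimension at least two, which is a statement about the global nilpotent cone over a stacky curve. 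The alternative, producing the Lie algebroid bracket and the duality pairing directly in the non-flat, non-simple situation while tracking the automorphism gerbe, would demand a more robust form of the lisse-\'etale formalism together with a direct verification of the Jacobi identity in families. It is this interplay between strict semistability, flatness of the direct images, and stacky descent that, to my mind, currently keeps the statement at the level of a conjecture.
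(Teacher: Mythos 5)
Your proposal follows essentially the same strategy the paper itself sketches for this statement: note that the statement is left as a \emph{conjecture} in the paper, which offers no proof but only an outline --- take a smooth atlas $\{M_\xi\}$ of $\widetilde{\mathcal{M}}^{ss}_H(\mathcal{X},\alpha)$ built from the Quot-scheme presentation of \cite{Sun201}, run the pointwise/universal-bundle construction of \S 4.2 on each chart, and then attempt to glue via lisse-\'etale descent in the sense of \cite{Wal}. Your write-up matches that outline and is, if anything, more candid than the paper about why the argument is incomplete: the failure of simplicity on the strictly semistable locus, the possible jumping of $R^1(\mu_M)_*\mathcal{E}nd(\mathscr{F}_M)$, and the nontrivial descent check are precisely the gaps that keep the statement conjectural, and you correctly refrain from claiming a proof. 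No discrepancy with the paper to report.
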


\section{Poisson Structure on the Moduli Space of stable $\mathcal{L}$-twisted Higgs Bundles over Stacky Curves}
In this section, we work on $\mathcal{M}_H(\mathcal{X},\mathcal{L},\alpha)$, the moduli space of stable $\mathcal{L}$-twisted Higgs bundles on a stacky curve $\mathcal{X}$ with fixed parabolic structure $\alpha$. We prove that a Poisson structure exists over $\mathcal{M}_H(\mathcal{X},\mathcal{L},\alpha)$ under  certain conditions.

Let $\mathcal{X}=[U/\Gamma]$ be a stacky curve. Denote by $X$ the coarse moduli space of $\mathcal{X}$. Let $\alpha$ be a parabolic structure on $X$, and denote by $\bar{D}=p_1 + \dots+p_k \in X$ the divisor with respect to $\alpha$. Let $\mathbb{D}=q_1 +\dots + q_k \in \mathcal{X}$ be the corresponding divisor on $\mathcal{X}$, where $q_i$ is the point corresponding to $p_i$.

\begin{thm}\label{501}
	If there exists a short exact sequence
	\begin{equation}\tag{5.1}\label{eq501}
	0 \rightarrow  \mathcal{H}om(\mathcal{F}\otimes \mathcal{L},\mathcal{F}  \otimes \omega_{\mathcal{X}} ) \rightarrow \mathcal{E}nd(\mathcal{F}) \rightarrow \mathfrak{n} \rightarrow 0,
	\end{equation}
	for any stable bundle $\mathcal{F} \in \mathcal{M}(\mathcal{X},\alpha)$  such that
	\begin{enumerate}
		\item the morphism $\mathcal{H}om(\mathcal{F}\otimes \mathcal{L},\mathcal{F}  \otimes \omega_{\mathcal{X}} )\rightarrow \mathcal{E}nd(\mathcal{F})$ is not surjective,
		\item $\mathfrak{n}$ is a sheaf of Lie algebras supported on $\mathbb{D}$,
	\end{enumerate}
	then the moduli space $\mathcal{M}_H(\mathcal{X},\mathcal{L},\alpha)$ admits a Poisson structure.
\end{thm}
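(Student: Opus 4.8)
\noindent The plan is to run the argument of the proof of Theorem~\ref{404} in the $\mathcal{L}$-twisted setting, using the hypothesised sequence \eqref{eq501} in the role played there by \eqref{eq41}. As in Theorem~\ref{404}, I would first restrict to the dense open subscheme $\mathcal{M}^0_H(\mathcal{X},\mathcal{L},\alpha)\subset\mathcal{M}_H(\mathcal{X},\mathcal{L},\alpha)$ of those $\mathcal{L}$-twisted stable Higgs bundles $(\mathcal{F},\Phi)$ whose underlying sheaf $\mathcal{F}$ is already a stable bundle; a Poisson structure on this open set extends to all of $\mathcal{M}_H(\mathcal{X},\mathcal{L},\alpha)$. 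The forgetful map $(\mathcal{F},\Phi)\mapsto\mathcal{F}$ exhibits $\mathcal{M}^0_H(\mathcal{X},\mathcal{L},\alpha)$ over $\mathcal{M}(\mathcal{X},\alpha)$ with fibre $H^0(\mathcal{X},\mathcal{E}nd(\mathcal{F})\otimes\mathcal{L})$, the space of $\mathcal{L}$-twisted Higgs fields, and the aim is to realise it as the total space of the dual of a Lie algebroid and then invoke Theorem~\ref{401}.

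\noindent Next I would spread \eqref{eq501} over the moduli space. Letting $\mathscr{F}$ be a universal bundle on $\mathcal{M}(\mathcal{X},\alpha)\times\mathcal{X}$ with projections $\mu_1,\nu_1$, the hypotheses---being conditions on each stable $\mathcal{F}$---assemble to an exact sequence
\[
0\to\mathcal{H}om(\mathscr{F}\otimes\mathcal{L},\mathscr{F}\otimes\omega_{\mathcal{X}})\to\mathcal{E}nd(\mathscr{F})\to\mathfrak{N}\to 0
\]
restricting to \eqref{eq501} on every slice $\{\mathcal{F}\}\times\mathcal{X}$. Put $\mathscr{A}^{\mathcal{L}}:=R^1(\mu_1)_*\mathcal{H}om(\mathscr{F}\otimes\mathcal{L},\mathscr{F}\otimes\omega_{\mathcal{X}})$. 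Applying $R^{\bullet}(\mu_1)_*$ and using hypothesis (2)---that $\mathfrak{n}$, hence $\mathfrak{N}$ fibrewise, is supported on $\mathbb{D}$, so $R^1(\mu_1)_*\mathfrak{N}=0$---together with $\mathcal{F}$ simple, so $R^0(\mu_1)_*\mathcal{E}nd(\mathscr{F})=\mathcal{O}_{\mathcal{M}(\mathcal{X},\alpha)}$, the long exact sequence truncates to an Atiyah-type sequence
\[
0\to\mathscr{A}d\to\mathscr{A}^{\mathcal{L}}\xrightarrow{\ a\ }R^1(\mu_1)_*\mathcal{E}nd(\mathscr{F})\to 0,
\]
in exact analogy with \eqref{eq421}, where $\mathscr{A}d$ is the cokernel of $\mathcal{O}_{\mathcal{M}(\mathcal{X},\alpha)}\to R^0(\mu_1)_*\mathfrak{N}$ and $R^1(\mu_1)_*\mathcal{E}nd(\mathscr{F})\cong T\mathcal{M}(\mathcal{X},\alpha)$ exactly as in the proof of Theorem~\ref{404}. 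Hypothesis (1) is what makes these terms locally free: since $\mathfrak{n}\neq 0$ is torsion, the locally free subsheaf $\mathcal{H}om(\mathcal{F}\otimes\mathcal{L},\mathcal{F}\otimes\omega_{\mathcal{X}})=\mathcal{E}nd(\mathcal{F})\otimes\mathcal{L}^{-1}\otimes\omega_{\mathcal{X}}$ of the degree-zero bundle $\mathcal{E}nd(\mathcal{F})$ has strictly negative degree $n^2(\deg\omega_{\mathcal{X}}-\deg\mathcal{L})$, forcing $\deg\mathcal{L}>\deg\omega_{\mathcal{X}}$; hence for every stable $\mathcal{F}$ the stable bundle $\mathcal{F}\otimes\mathcal{L}$ has strictly larger slope than the stable bundle $\mathcal{F}\otimes\omega_{\mathcal{X}}$, so $\mathrm{Hom}(\mathcal{F}\otimes\mathcal{L},\mathcal{F}\otimes\omega_{\mathcal{X}})=0$. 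Thus $R^0(\mu_1)_*\mathcal{H}om(\mathscr{F}\otimes\mathcal{L},\mathscr{F}\otimes\omega_{\mathcal{X}})=0$, so $\mathscr{A}^{\mathcal{L}}$ is locally free and compatible with base change, $\mathcal{M}^0_H(\mathcal{X},\mathcal{L},\alpha)$ is a genuine vector bundle over $\mathcal{M}(\mathcal{X},\alpha)$, and the displayed sequence is one of vector bundles.

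\noindent Then I would identify $\mathscr{A}^{\mathcal{L}}$ with the dual of $\mathcal{M}^0_H(\mathcal{X},\mathcal{L},\alpha)$. By relative Grothendieck--Serre duality along $\mu_1$ (\S3.3, with relative dualizing sheaf $\omega_{\mathcal{X}}$ and relative dimension one, cf.\ Theorem~\ref{305}), $\mathscr{A}^{\mathcal{L}}$ is dual to $R^0(\mu_1)_*\big(\mathcal{E}nd(\mathscr{F})\otimes\mathcal{L}\big)$, and the latter is exactly $\mathcal{M}^0_H(\mathcal{X},\mathcal{L},\alpha)$ as a bundle over $\mathcal{M}(\mathcal{X},\alpha)$. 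It then remains to equip $\mathscr{A}^{\mathcal{L}}$ with a Lie algebroid structure over $\mathcal{M}(\mathcal{X},\alpha)$: here hypothesis (2), read as saying that \eqref{eq501} is a sequence of sheaves of Lie algebras for the commutator bracket of $\mathcal{E}nd(\mathcal{F})$ (so that $\mathcal{H}om(\mathcal{F}\otimes\mathcal{L},\mathcal{F}\otimes\omega_{\mathcal{X}})$ is a sheaf of Lie algebras and $\mathscr{A}d$ a bundle of Lie algebras), lets one repeat the Atiyah algebroid construction of \S4.1 on the displayed sequence, with anchor $a$ onto $T\mathcal{M}(\mathcal{X},\alpha)$. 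Theorem~\ref{401} then produces a natural Poisson structure on the total space of $(\mathscr{A}^{\mathcal{L}})^{*}=\mathcal{M}^0_H(\mathcal{X},\mathcal{L},\alpha)$, which extends to $\mathcal{M}_H(\mathcal{X},\mathcal{L},\alpha)$ by density.

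\noindent The step I expect to be the real obstacle is the last one: verifying that $\mathscr{A}^{\mathcal{L}}$ is honestly a Lie algebroid over the moduli of bundles, rather than merely the middle term of a short exact sequence of vector bundles. In Theorem~\ref{404} the sequence \eqref{eq41} was completely explicit---$\mathfrak{N}=\prod_{q\in\mathbb{D}}\mathfrak{n}_q\otimes\mathcal{O}_{\nu_1^{-1}(q)}$ and $\mathscr{A}d$ the bundle with fibres $\prod_q\mathfrak{n}_q$---so the bracket, the Leibniz rule for the anchor, and the constancy of $\mathrm{rk}\,\mathscr{A}d$ were immediate; under the abstract hypotheses (1)--(2) one must instead check that $\mathfrak{N}$ is flat over $\mathcal{M}(\mathcal{X},\alpha)$, and that the sheaf-of-Lie-algebras structure of $\mathfrak{n}$ really yields the Jacobi identity and the anchor's Leibniz rule, i.e.\ that the construction of \S4.1 genuinely applies here. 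One should also confirm that $\mathcal{M}^0_H(\mathcal{X},\mathcal{L},\alpha)$ is dense in $\mathcal{M}_H(\mathcal{X},\mathcal{L},\alpha)$ in this situation, which is again where the inequality $\deg\mathcal{L}>\deg\omega_{\mathcal{X}}$ extracted from (1) enters.
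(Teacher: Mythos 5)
Your proposal follows essentially the same route as the paper's proof: restrict to $\mathcal{M}^0_H(\mathcal{X},\mathcal{L},\alpha)$, spread the hypothesised sequence \eqref{eq501} over the universal bundle, use the support condition on $\mathfrak{n}$ to kill $H^1$ and truncate the long exact sequence into an Atiyah-type sequence with quotient $R^1(\mu_1)_*\mathcal{E}nd(\mathscr{F})\cong T\mathcal{M}(\mathcal{X},\alpha)$, identify the dual of the resulting algebroid with $\mathcal{M}^0_H(\mathcal{X},\mathcal{L},\alpha)$ via Grothendieck duality, and invoke Theorem \ref{401}. Your extra observation that hypothesis (1) forces $\deg\mathcal{L}>\deg\omega_{\mathcal{X}}$, hence ${\rm Hom}(\mathcal{F}\otimes\mathcal{L},\mathcal{F}\otimes\omega_{\mathcal{X}})=0$ and local freeness/base change for $R^1(\mu_1)_*$, is a correct refinement that the paper leaves implicit, as is your caveat that the Lie bracket and Leibniz rule on the algebroid are asserted rather than verified in the paper's argument.
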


\begin{proof}
	Analogously to the proof of Theorem \ref{404}, we only have to work with $\mathcal{M}^0_H(\mathcal{X},\mathcal{L},\alpha)$, the moduli space of $\mathcal{L}$-twisted stable Higgs bundles over $\mathcal{X}$, such that the underlying locally free sheaf is stable.\\
	If the short exact sequence \eqref{eq501} exists, we then obtain a long exact sequence
	\begin{align*}
	0 & \rightarrow  {\rm Hom}(\mathcal{F}\otimes \mathcal{L},\mathcal{F}  \otimes \omega_{\mathcal{X}} ) \rightarrow {\rm End}(\mathcal{F}) \rightarrow H^0(\mathcal{X}, \mathfrak{n}) \rightarrow  \\
	& \rightarrow {\rm Ext}^1(\mathcal{F}\otimes \mathcal{L},\mathcal{F}  \otimes \omega_{\mathcal{X}} )  \rightarrow {\rm Ext}^1(\mathcal{F},\mathcal{F}) \rightarrow H^1(\mathcal{X}, \mathfrak{n}) \rightarrow 0.\label{Ltwist}
	\end{align*}
	By our assumption that the support of $\mathfrak{n}$ is contained in $\mathbb{D}$, the last term $H^1(\mathcal{X}, \mathfrak{n})$ is trivial. This implies a short exact sequence
	\begin{equation}\tag{5.2}\label{eq502}
	0 \rightarrow {\rm Ad} \rightarrow {\rm Ext}^1(\mathcal{F} \otimes \mathcal{L},\mathcal{F}  \otimes \omega_{\mathcal{X}})  \rightarrow {\rm Ext}^1(\mathcal{F},\mathcal{F}) \rightarrow 0,
	\end{equation}
	where ${\rm Ad}$ is the kernel of the map ${\rm Ext}^1(\mathcal{F}\otimes \mathcal{L} ,\mathcal{F}  \otimes \omega_{\mathcal{X}})  \rightarrow {\rm Ext}^1(\mathcal{F},\mathcal{F})$. Let $\mathscr{F}$ be the universal bundle on $\mathcal{M}(\mathcal{X},\alpha) \times \mathcal{X}$ and consider
	\begin{center}
		\begin{tikzcd}
		&  \mathcal{M}(\mathcal{X},\alpha) \times \mathcal{X} \arrow[dl,swap, "\mu_1"] \arrow[dr, "\nu_1"] & \\
		\mathcal{M}(\mathcal{X},\alpha)   & & \mathcal{X}
		\end{tikzcd}
	\end{center}
	The short exact sequence \eqref{eq502} induces the following sequence for universal bundles
	\begin{equation}\tag{5.3}\label{eq503}
	0 \rightarrow \mathscr{A}d \rightarrow R^1 (\mu_1)_* \mathcal{H}om(\mathscr{F} \otimes \nu_1^*\mathcal{L},\mathscr{F}\otimes \nu_1^*\omega_{\mathcal{X}})  \rightarrow R^1 (\mu_1)_* \mathcal{E}nd(\mathscr{F}) \rightarrow 0.
	\end{equation}
	With respect to our assumption about $\mathfrak{n}$, the short exact sequence \eqref{eq503} gives a Lie algebroid structure on $R^1 (\mu_1)_* \mathcal{H}om(\mathscr{F} \otimes \nu_1^*\mathcal{L},\mathscr{F}\otimes \nu_1^*\omega_{\mathcal{X}})$. Note that
	\begin{align*}
	R^1 (\mu_1)_* \mathcal{E}nd(\mathscr{F}) \cong T \mathcal{M}(\mathcal{X},\alpha).
	\end{align*}
	Therefore, $R^1 (\mu_1)_* \mathcal{H}om(\mathscr{F} \otimes \nu_1^*\mathcal{L},\mathscr{F}\otimes \nu_1^*\omega_{\mathcal{X}})$ is a Lie algebroid over $T \mathcal{M}(\mathcal{X},\alpha)$. This implies that $\left(R^1 (\mu_1)_* \mathcal{H}om(\mathscr{F} \otimes \nu_1^*\mathcal{L},\mathscr{F}\otimes \nu_1^*\omega_{\mathcal{X}}) \right)^*$ has a Poisson structure. We only have to prove that
	\begin{align*}
	R^1 (\mu_1)_* \mathcal{E}nd(\mathscr{F} \otimes \nu_1^*\mathcal{L},\mathscr{F}\otimes \nu_1^* \omega_{\mathcal{X}})^*\cong \mathcal{M}^0_H(\mathcal{X},\mathcal{L},\alpha),
	\end{align*}
	which would imply the result.
	
	By Grothendieck duality (see \S 3.3), we see that
	\begin{align*}
	{\rm Ext}^1(\mathcal{F}\otimes \mathcal{L},\mathcal{F}  \otimes \omega_{\mathcal{X}})^* \cong H^0(\mathcal{X},\mathcal{E}nd(\mathcal{F}) \otimes \mathcal{L}).
	\end{align*}
	Equivalently,
	\begin{align*}
	R^1 (\mu_1)_* \mathcal{H}om(\mathscr{F} \otimes \nu_1^*\mathcal{L},\mathscr{F}\otimes \nu_1^*\omega_{\mathcal{X}}) \cong (\mu_1)_* \mathcal{H}om(\mathscr{F} ,\mathscr{F}\otimes \nu_1^*\mathcal{L}).
	\end{align*}
	Therefore, $R^1 (\mu_1)_* \mathcal{H}om(\mathscr{F} \otimes \nu_1^*\mathcal{L},\mathscr{F}\otimes \nu_1^* \omega_{\mathcal{X}})^*$ is isomorphic to $\mathcal{M}^0_H(\mathcal{X},\mathcal{L},\alpha)$ and this finishes the proof of the theorem.
\end{proof}

\begin{rem}\label{502}
	Note that if the morphism $\mathcal{H}om(\mathcal{F}\otimes \mathcal{L},\mathcal{F}  \otimes \omega_{\mathcal{X}} )\rightarrow \mathcal{E}nd(\mathcal{F})$ is surjective, and $\mathfrak{n}$ is zero, then the term $\mathscr{A}d$ in (\ref{eq503}) is zero. This means that (\ref{eq503}) is not an Atiyah sequence, and the approach of Theorem \ref{501} fails in this case. Therefore, Theorem \ref{501} does not apply to the case $\mathcal{L}=\omega_{\mathcal{X}}$. However, this special case is treated exactly in Theorem \ref{404}.
\end{rem}

\section{Relation With the Work of Logares-Martens}
In this section, we compare the results of Sections 4 and 5 to the main result of Logares and Martens \cite{LoMa}. In the special case of a root stack and a parabolic structure when all parabolic weights are rational, we show that the moduli space of stable orbifold $G$-Higgs bundles has a Poisson structure. 

Let $X$ be a smooth projective curve. Let $\bar{D}=p_1 + \dots + p_k$ be a reduced effective divisor on $X$, and let $\bar{r}=(r_1,\dots,r_k)$ be a $k$-tuple of positive integers. We denote by $X_{\bar{D},\bar{r}}$ the corresponding root stack and there is a natural map $\pi: X_{\bar{D},\bar{r}} \rightarrow X$. Denote $\mathbb{D}:=\pi^{-1}(\bar{D})$. By the discussion in \S 2.5, we know that there is a correspondence between parabolic bundles on $(X,\bar{D})$ and bundles on $\mathcal{X}:=X_{\bar{D},\bar{r}}$. Moreover, we have the following equivalence in the language of tensor categories from \cite{Bor}:
\begin{prop}{{\cite[Th\'{e}or\`{e}me 4, 5]{Bor}}}\label{601}
	There is an equivalence of tensor categories between bundles $\mathcal{F}$ with parabolic structure $\alpha$ on $\mathcal{X}$ and parabolic bundles $F$ with the same parabolic structure $\alpha$ on $(X,\bar{D})$. In particular, this equivalence preserves the degree, that is, $\pardeg(F)=\deg(\mathcal{F})$.
\end{prop}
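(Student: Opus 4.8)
This is Borne's equivalence, and the plan is to reconstruct it by exhibiting a quasi-inverse pair of functors implemented by push--pull along the coarse map $\pi\colon\mathcal{X}=X_{\bar{D},\bar{r}}\to X$. Write $\mathbb{D}=q_1+\dots+q_k$ with $r_iq_i=\pi^{*}D_i$, so that $\mathcal{O}_{\mathcal{X}}(q_i)$ is the line bundle with $\mathcal{O}_{\mathcal{X}}(q_i)^{\otimes r_i}\cong\pi^{*}\mathcal{O}_X(D_i)$ playing the role of the fractional divisor $\tfrac1{r_i}D_i$. To a locally free sheaf $\mathcal{F}$ on $\mathcal{X}$ I would attach the $\mathbb{Z}^{k}$-indexed system $\bar{m}=(m_1,\dots,m_k)\mapsto\pi_{*}\!\bigl(\mathcal{F}(\textstyle\sum_i m_iq_i)\bigr)$ of sheaves on $X$, with the evident inclusions as transition maps. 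Since $\pi$ is finite flat and $\pi_{*}$ is exact, each term is locally free; the projection formula together with $r_iq_i=\pi^{*}D_i$ gives the pseudo-periodicity $\pi_{*}(\mathcal{F}(\sum(m_i+r_i)q_i))\cong\pi_{*}(\mathcal{F}(\sum m_iq_i))\otimes\mathcal{O}_X(D_i)$, which is exactly the data of a parabolic bundle on $(X,\bar{D})$ with weights in $\tfrac1{r_i}\mathbb{Z}$ at $p_i$. Unwinding the definitions of \S2.5, the induced weighted filtration of $\pi_{*}\mathcal{F}$ at $p_i$ has its jumps exactly at $\alpha_j(p_i)/r_i$, reading off the $\mu_{r_i}$-isotypical decomposition of $\mathcal{F}$ near $q_i$; so this functor preserves the parabolic type $\alpha$.

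For the quasi-inverse I would argue by \'etale descent against $\pi$. Over $X\setminus\bar{D}$ the map $\pi$ is an isomorphism and $\mathcal{F}$ is just the underlying bundle $F$ there. Near $p_i$, \S2.4 presents $\mathcal{X}\times_XU_i\cong[V_i/\mu_{r_i}]$ with $V_i={\rm Spec}(\mathcal{O}_{U_i}[t]/(t^{r_i}-z_i))$, and a $\mu_{r_i}$-equivariant locally free sheaf on $V_i$ is the same thing as a $\tfrac1{r_i}\mathbb{Z}$-filtered lattice inside $F$; one writes down the equivariant free module whose grading has the prescribed weights and glues it to the trivially filtered $F$ on $X\setminus\bar{D}$. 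That the two constructions are mutually quasi-inverse is then checked chart by chart: on the open locus both are the identity, and near each $q_i$ it reduces to the statement that pushing forward and pulling back along the $\mu_{r_i}$-cover are inverse on equivariant free modules, a direct character computation.

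The crux of the argument, and the step I expect to be most delicate, is compatibility with the tensor structures: on $\mathcal{X}$ we use the ordinary $\otimes_{\mathcal{O}_{\mathcal{X}}}$, but on parabolic bundles the tensor product is the \emph{parabolic} one, obtained from the pieces $F_{\bar{m}}\otimes F'_{\bar{m}'}$ by a saturation that renormalizes weights back into $[0,1)$. One must show that applying $\pi_{*}$ to $(\mathcal{F}\otimes\mathcal{F}')(\sum m_iq_i)$ reproduces exactly this saturated colimit. The inputs are the projection formula, monoidality of $\pi^{*}$, and again $r_iq_i=\pi^{*}D_i$: when two local weights $\alpha_j(p_i)/r_i$ and $\alpha'_{j'}(p_i)/r_i$ sum to at least $1$, the product of the corresponding isotypical pieces ``wraps around'' and picks up an \emph{integral} twist by $\mathcal{O}_X(D_i)$ — and this wrap-around is precisely the normalization encoded in the parabolic tensor product. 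Carrying this bookkeeping out uniformly in $\mathcal{F},\mathcal{F}'$, and verifying that it respects the unit $\mathcal{O}_{\mathcal{X}}\leftrightarrow(\mathcal{O}_X,\text{trivial weights})$ together with the associativity and commutativity constraints, is the main obstacle; once done, the equivalence is automatically an equivalence of tensor categories.

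For the degree identity, since $\mathcal{X}$ is a stacky curve one has $\deg(\mathcal{F})=q_{*}c_1(\mathcal{F})=\int_{\mathcal{X}}c_1(\mathcal{F})$ and $\int_{\mathcal{X}}c_1(\mathcal{O}_{\mathcal{X}}(q_i))=\tfrac1{r_i}\int_{\mathcal{X}}c_1(\pi^{*}\mathcal{O}_X(D_i))=\tfrac1{r_i}$. Decomposing $\mathcal{F}$ \'etale-locally near $\mathbb{D}$ into its $\mu_{r_i}$-isotypical line subbundles shows that $\mathcal{F}$ is obtained from $\pi^{*}F$ by twisting along $\mathbb{D}$ according to the weights, so $c_1(\mathcal{F})=\pi^{*}c_1(F)+\sum_{i}\sum_j\tfrac{\alpha_j(p_i)}{r_i}[q_i]$ and hence $\deg(\mathcal{F})=\deg(F)+\sum_{p}\sum_j\tfrac{\alpha_j(p)}{r(p)}=\pardeg(F)$; alternatively this follows from $\chi(\mathcal{X},\mathcal{F})=\chi(X,\pi_{*}\mathcal{F})$ combined with parabolic Riemann--Roch on $X$.
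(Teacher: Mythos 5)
Your sketch is correct in outline and follows essentially the same route as the source the paper relies on: the paper gives no proof of Proposition \ref{601}, citing Borne's Th\'eor\`emes 4 and 5, and your push--pull construction $\bar{m}\mapsto\pi_{*}\bigl(\mathcal{F}(\sum_i m_iq_i)\bigr)$ with pseudo-periodicity from $r_iq_i=\pi^{*}D_i$, the local $[V_i/\mu_{r_i}]$ description of the quasi-inverse, the ``wrap-around'' analysis for the parabolic tensor product, and the $\deg(\mathcal{O}_{\mathcal{X}}(q_i))=1/r_i$ degree count are exactly the ingredients of Borne's argument. The only caveats are cosmetic: the coarse map is not finite or representable in the strict sense (though $\pi_{*}$ is indeed exact and pushforwards of locally free sheaves on a stacky curve are locally free, which is all you use), and the tensor-compatibility step, which you rightly flag as the crux, is asserted rather than carried out.
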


As discussed in \S 2.5, this equivalence implies the correspondence in stability as in \cite[Remarque 10]{Bor}. More precisely, we have $\mathcal{M}(\mathcal{X},\alpha)\cong \mathcal{M}^{par}(X,\alpha)$. It is natural to extend this correspondence to Higgs bundles \cite{BMW2, BMW,NaSt}, thus giving $\mathcal{M}_H(\mathcal{X},\alpha)\cong \mathcal{M}_H^{spar}(X,\alpha)$. Now let $\mathcal{L}$ be a line bundle on $\mathcal{X}$ and let $\pi:\mathcal{X}\to X$ be the map from the root stack $\mathcal{X}$ to its coarse moduli space $X$. Denote by $L$ the corresponding parabolic line bundle of $\mathcal{L}$ on $(X,\bar{D})$. There is a one-to-one correspondence between $\mathcal{L}$-twisted stable Higgs bundles on $\mathcal{X}$ and $L$-twisted stable parabolic bundles on $(X,\bar{D})$ (see \cite[\S 5]{KSZ2}).

In conclusion, we have the following proposition.
\begin{prop}\label{602}
	Let $\mathcal{X}=X_{\bar{D},\bar{r}}$ be a root stack and denote by $X$ the coarse moduli space of $\mathcal{X}$. The following statements hold:
	\begin{enumerate}
		\item There is an isomorphism $\mathcal{M}_H(\mathcal{X},\alpha)\cong \mathcal{M}^{spar}_H\left(X,\alpha\right)$, where $\mathcal{M}^{spar}_H\left(X,\alpha\right)$ is the moduli space of strongly parabolic Higgs bundles over $X$ with parabolic structure $\alpha$.
		\item There is an isomorphism $\mathcal{M}_H(\mathcal{X},\omega_{\mathcal{X}}(\mathbb{D}),\alpha)\cong \mathcal{M}^{par}_H\left(X,\alpha\right)$, where $\omega_{\mathcal{X}}(\mathbb{D})$ is the canonical line bundle on ${\mathcal{X}}$ over the divisor $\mathbb{D}$.
		\item Let $\mathcal{L}$ be an invertible sheaf on $\mathcal{X}$ and let $\pi:\mathcal{X}\to X$ be the map from the root stack to its coarse moduli space. Denote by $L$ the corresponding parabolic line bundle to $\mathcal{L}$ on $(X,\bar{D})$. Then there is an isomorphism $\mathcal{M}_H(\mathcal{X},\mathcal{L},\alpha)\cong \mathcal{M}^{par}_H\left(X,L,\alpha\right)$.
	\end{enumerate}
\end{prop}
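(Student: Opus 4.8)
The plan is to reduce all three isomorphisms to Borne's equivalence of tensor categories (Proposition~\ref{601}), an explicit identification of the twisting line bundles on the root stack in parabolic language, and the standard passage from an equivalence of moduli functors to an isomorphism of the representing schemes. Fix $\mathcal{X}=X_{\bar{D},\bar{r}}$ with $\pi\colon\mathcal{X}\to X$, and for a locally free sheaf $\mathcal{F}$ on $\mathcal{X}$ of parabolic type $\alpha$ write $F$ for the corresponding parabolic bundle on $(X,\bar{D})$. By Proposition~\ref{601} the assignment $\mathcal{F}\mapsto F$ is a $\otimes$-equivalence with $\deg(\mathcal{F})=\pardeg(F)$; in particular it carries the internal sheaf $\mathcal{H}om_{\mathcal{X}}(\mathcal{F},\mathcal{F}\otimes\mathcal{L})$ to $\mathcal{P}ar\mathcal{E}nd(F)\otimes L$, where $L$ is the parabolic line bundle of $\mathcal{L}$, it matches $\Phi$-invariant subsheaves of $\mathcal{F}$ with $\Phi$-invariant parabolic subbundles of $F$, and hence matches slope-(semi)stability on the two sides. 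Everything also has to be checked to be compatible with base change $T\to{\rm Spec}(\mathbb{C})$, which is how the cited references set things up.

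I would begin with part~(3), since (1) and (2) are special cases. For an invertible sheaf $\mathcal{L}$ on $\mathcal{X}$ with associated parabolic line bundle $L$, an $\mathcal{L}$-twisted Higgs field $\Phi\colon\mathcal{F}\to\mathcal{F}\otimes\mathcal{L}$ is a section of $\mathcal{H}om_{\mathcal{X}}(\mathcal{F},\mathcal{F}\otimes\mathcal{L})$, which under the equivalence becomes a section of $\mathcal{P}ar\mathcal{E}nd(F)\otimes L$, i.e. an $L$-twisted parabolic Higgs field; this object-level correspondence, together with its compatibility with families, is exactly the content of \cite[\S 5]{KSZ2}. Combined with the matching of stability noted above, this gives an equivalence of the corresponding moduli functors of stable objects, hence an isomorphism $\mathcal{M}_H(\mathcal{X},\mathcal{L},\alpha)\cong\mathcal{M}^{par}_H(X,L,\alpha)$: the stacky moduli space is the (fine, on the stable locus) moduli space constructed in \S 3.1 following \cite{Simp2010}, the parabolic one is that of \cite{Yoko1}, and an equivalence of functors compatible with base change induces a canonical isomorphism of the representing schemes.

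For parts~(1) and~(2) it then remains to identify the parabolic line bundles corresponding to $\Omega^1_{\mathcal{X}}\cong\omega_{\mathcal{X}}$ and to $\omega_{\mathcal{X}}(\mathbb{D})$. Comparing $dw=r_i\,z^{r_i-1}\,dz$ in the local model $[\mathbb{A}^1_z/\mu_{r_i}]\to\mathbb{A}^1_w$, $w=z^{r_i}$, gives $\omega_{\mathcal{X}}\cong\pi^*\omega_X\big(\sum_i(r_i-1)q_i\big)$, and since $\pi^*\mathcal{O}_X(p_i)=\mathcal{O}_{\mathcal{X}}(r_iq_i)$ this yields $\omega_{\mathcal{X}}(\mathbb{D})\cong\pi^*\omega_X(\bar{D})$, a line bundle pulled back from $X$ and so of trivial parabolic type. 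Thus $\mathcal{L}=\omega_{\mathcal{X}}(\mathbb{D})$ corresponds to $L=\omega_X(\bar{D})$ with trivial weights, and part~(3) specializes to $\mathcal{M}_H(\mathcal{X},\omega_{\mathcal{X}}(\mathbb{D}),\alpha)\cong\mathcal{M}^{par}_H(X,\alpha)$, which is part~(2). For part~(1), the untwisted case $\mathcal{L}=\omega_{\mathcal{X}}=\omega_{\mathcal{X}}(\mathbb{D})\otimes\mathcal{O}_{\mathcal{X}}(-\mathbb{D})$, the sheaf $\mathcal{H}om_{\mathcal{X}}(\mathcal{F},\mathcal{F}\otimes\omega_{\mathcal{X}})=\big(\mathcal{E}nd(\mathcal{F})\otimes\mathcal{O}_{\mathcal{X}}(-\mathbb{D})\big)\otimes\pi^*\omega_X(\bar{D})$ pushes forward, by the projection formula and the computation recorded in \S 2.5, to $\mathcal{SP}ar\mathcal{E}nd(F)\otimes\omega_X(\bar{D})$ --- precisely the normal form $\Phi_{ij}=z^{\alpha_i(p)-\alpha_j(p)}\hat{\Phi}_{ij}(z^{r(p)})\frac{dz}{z}$ for $\alpha_i\ge\alpha_j$ and $\Phi_{ij}=0$ for $\alpha_i<\alpha_j$ --- so an untwisted Higgs field on $\mathcal{X}$ is the same datum as a strongly parabolic Higgs field on $(X,\bar{D})$, giving $\mathcal{M}_H(\mathcal{X},\alpha)\cong\mathcal{M}^{spar}_H(X,\alpha)$.

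The main obstacle I expect is the weight bookkeeping at the marked points: one must pin down which parabolic line bundle each of $\omega_{\mathcal{X}}$, $\omega_{\mathcal{X}}(\mathbb{D})$ and a general $\mathcal{L}$ corresponds to, and verify that the parabolic internal Hom $\mathcal{P}ar\mathcal{H}om(F,F\otimes L)$ is \emph{literally} $\mathcal{SP}ar\mathcal{E}nd(F)\otimes\omega_X(\bar{D})$, resp. $\mathcal{P}ar\mathcal{E}nd(F)\otimes\omega_X(\bar{D})$, resp. $\mathcal{P}ar\mathcal{E}nd(F)\otimes L$, rather than some near-miss differing by a shift of weights. This comes down to the explicit local computation of \S 2.5, but it also requires stating carefully that the parabolic tensor category is \emph{closed} and that the equivalence of Proposition~\ref{601} is closed-monoidal, so that the phrase ``equivalence of tensor categories'' genuinely transports internal Homs, and in families the corresponding direct-image sheaves; this closedness is implicit in the frameworks of Borne and Yokogawa and should be the precise input invoked rather than used tacitly.
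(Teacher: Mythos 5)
Your proposal is correct and follows essentially the same route as the paper, which offers no separate proof of Proposition \ref{602} but simply assembles it from Borne's tensor equivalence (Proposition \ref{601}), the induced matching of stability, and the twisted correspondence of \cite[\S 5]{KSZ2}. The only difference is organizational — you deduce (1) and (2) from (3) via the explicit identifications $\omega_{\mathcal{X}}\cong\pi^*\omega_X\bigl(\sum_i(r_i-1)q_i\bigr)$ and $\omega_{\mathcal{X}}(\mathbb{D})\cong\pi^*\omega_X(\bar{D})$, whereas the paper cites \cite{BMW2,BMW,NaSt} for (1) directly — and your version supplies details (the local weight computation, the closed-monoidal caveat) that the paper leaves to the references.
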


Note that an orbifold is always a root stack. Thus, we have the following corollary to Theorem \ref{405}:
\begin{cor}\label{603}
	The moduli space of stable orbifold $G$-Higgs bundles has a Poisson structure.
\end{cor}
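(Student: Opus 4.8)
The plan is to obtain Corollary \ref{603} as a direct specialisation of Theorem \ref{405}, using the root-stack description of orbifolds.

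\emph{Identify the orbifold with a root stack.} First I would recall the structural fact — recorded in the discussion preceding Proposition \ref{601} and used throughout \S 2.4--2.5 — that an orbifold curve over $\mathbb{C}$ is canonically a Cadman--Vistoli root stack $\mathcal{X}=X_{\bar{D},\bar{r}}$ over its coarse moduli space $X$, with $\bar{D}=p_1+\dots+p_k$ the locus of nontrivial stabilizers and $\bar{r}=(r_1,\dots,r_k)$ their orders. In particular such an $\mathcal{X}$ is a smooth projective geometrically connected Deligne--Mumford stack of dimension one, hence a stacky curve in the sense of Definition \ref{203}; writing $\mathcal{X}=[U/\Gamma]$ as a global quotient places us exactly in the setting of \S 4.2--4.3, so the hypotheses of the theorems of \S 4 are satisfied.

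\emph{Translate orbifold $G$-Higgs bundles.} Next I would fix a faithful representation $G\hookrightarrow \mathrm{GL}(V)$ and match the notion of a stable orbifold $G$-Higgs bundle with that of a stable $G$-Higgs bundle over $\mathcal{X}=X_{\bar{D},\bar{r}}$ carrying the parabolic structure $\alpha$ determined by the root data, the stability being the semiharmonic (Simpson) stability of \S 2.3 and \S 2.5. Under the presentation $\mathcal{X}=[U/\Gamma]$ this is the statement that a $\Gamma$-linearised $G$-Higgs bundle on $U$ is the same as a $G$-Higgs bundle on $\mathcal{X}$, which is built into the definitions; the residual parabolic weights at the $p_j$ are read off from the local $\mathbb{Z}_{r(p_j)}$-actions exactly as in \S 2.5. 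With this dictionary the moduli space of stable orbifold $G$-Higgs bundles is precisely $\mathcal{M}_H(\mathcal{X},G,\alpha)$, and Theorem \ref{405} applies verbatim to give it a Poisson structure. If one prefers the statement on the coarse curve, apply Proposition \ref{602} — extended from $\mathrm{GL}_n$ to a general $G$ through the chosen faithful representation, just as in the proof of Theorem \ref{405} — to transport the Poisson structure to the moduli space of strongly parabolic $G$-Higgs bundles on $(X,\bar{D})$, or, after twisting by $\omega_{\mathcal{X}}(\mathbb{D})$, to the moduli space of parabolic (rather than strongly parabolic) $G$-Higgs bundles, thereby recovering an orbifold/root-stack $G$-analogue of the Bottacin--Markman and Logares--Martens results; the corresponding statement for the moduli \emph{stack} follows from Corollary \ref{408}.

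\emph{Main obstacle.} The only step requiring genuine care is the bookkeeping above: one must check that ``orbifold $G$-Higgs bundle'' in the sense of the sources on orbifold bundles coincides — including the precise normalisation of the parabolic weights and, crucially, the stability condition — with ``$G$-Higgs bundle over the root stack with parabolic structure $\alpha$'' as used here, since stability of principal $G$-bundles depends on the faithful representation (see Remark \ref{406}); when $G$ is semisimple this dependence disappears and the identification is unambiguous. Once this matching is pinned down, no further argument is needed beyond citing Theorem \ref{405} (and Proposition \ref{602} or Corollary \ref{408} for the variants).
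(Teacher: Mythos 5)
Your proposal matches the paper's intended argument: the paper derives Corollary \ref{603} precisely by observing that an orbifold (curve) is a root stack, hence a stacky curve in the sense of Definition \ref{203}, identifying stable orbifold $G$-Higgs bundles with stable $G$-Higgs bundles on $\mathcal{X}=X_{\bar{D},\bar{r}}$ with the induced parabolic structure $\alpha$, and then invoking Theorem \ref{405}. Your extra care about matching the stability conditions (via the fixed faithful representation, cf.\ Remark \ref{406}) is a reasonable elaboration of a point the paper leaves implicit, but the route is the same.
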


In \cite[\S 3.2.2]{LoMa}, the following short exact sequence is used to construct a Poisson structure on $\mathcal{M}^{par}_H(X,\alpha)$
\begin{equation}\tag{6.1}\label{eq601}
0\to \mathcal{SP}ar\mathcal{E}nd(F)\to \mathcal{P}ar\mathcal{E}nd(F)\to \prod_{p\in D} \mathfrak{l}_p\otimes \mathcal{O}_p\to 0,
\end{equation}
where $F$ is a parabolic bundle on $(X,\bar{D})$, $\mathcal{SP}ar\mathcal{E}nd(F)$ is the sheaf of strongly parabolic endomorphisms and $\mathcal{P}ar\mathcal{E}nd(F)$ is the sheaf of parabolic endomorphisms. Translating to the language of stacks, we have
\begin{align*}
\mathcal{H}om(\mathcal{F}(\mathbb{D}),\mathcal{F}  ) \cong \mathcal{SP}ar\mathcal{E}nd(F), \quad \mathcal{H}om(\mathcal{F},\mathcal{F}  ) \cong \mathcal{P}ar\mathcal{E}nd(F),
\end{align*}
where $\mathcal{F}$ is the bundle on root stacks corresponding to $F$. Therefore the sequence \eqref{eq601} is equivalent to
\begin{equation}\tag{6.2}\label{eq602}
0 \rightarrow  \mathcal{H}om(\mathcal{F}(\mathbb{D}),\mathcal{F}  ) \rightarrow \mathcal{H}om(\mathcal{F},\mathcal{F}) \rightarrow \prod_{p\in D} \mathfrak{l}_p\otimes \mathcal{O}_p \rightarrow 0.
\end{equation}
Note that when $\mathcal{L}=\omega_{\mathcal{X}}(\mathbb{D})$, the short exact sequence \eqref{eq501} becomes
\begin{align*}
0 \rightarrow  \mathcal{H}om(\mathcal{F}(\mathbb{D}),\mathcal{F}  ) \rightarrow \mathcal{H}om(\mathcal{F},\mathcal{F}) \rightarrow \mathfrak{n} \rightarrow 0.
\end{align*}
Clearly, the sequence \eqref{eq602} satisfies the conditions of Theorem \ref{501}. With respect to the above discussion, we can prove alternatively to \cite{LoMa} the following:

\begin{cor}\label{604}
	The moduli space $\mathcal{M}^{par}_H(X,\alpha)$ has a Poisson structure.
\end{cor}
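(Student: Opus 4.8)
The plan is to derive this from Theorem~\ref{501} through the dictionary of Proposition~\ref{602}. Assume first that the weights $\alpha$ are rational, so that $(X,\bar D,\alpha)$ is realised by a root stack $\mathcal{X}=X_{\bar D,\bar r}$ and, by Proposition~\ref{602}(2), $\mathcal{M}^{par}_H(X,\alpha)\cong \mathcal{M}_H(\mathcal{X},\omega_{\mathcal{X}}(\mathbb{D}),\alpha)$. It therefore suffices to verify that the twisting line bundle $\mathcal{L}=\omega_{\mathcal{X}}(\mathbb{D})$ meets the two hypotheses of Theorem~\ref{501}; granting this, Theorem~\ref{501} produces a Poisson structure on $\mathcal{M}_H(\mathcal{X},\omega_{\mathcal{X}}(\mathbb{D}),\alpha)$, which transports along the isomorphism above to $\mathcal{M}^{par}_H(X,\alpha)$.

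For the verification I would use the short exact sequence~\eqref{eq602},
\begin{equation*}
0 \rightarrow  \mathcal{H}om(\mathcal{F}(\mathbb{D}),\mathcal{F}) \rightarrow \mathcal{E}nd(\mathcal{F}) \rightarrow \textstyle\prod_{p\in D} \mathfrak{l}_p\otimes \mathcal{O}_p \rightarrow 0,
\end{equation*}
which is the translation — under the exact pushforward $\pi_*$ and the tensor-category equivalence of Proposition~\ref{601} — of the classical sequence~\eqref{eq601} relating strongly parabolic and parabolic endomorphisms. Since $\mathcal{L}=\omega_{\mathcal{X}}(\mathbb{D})$ gives $\mathcal{F}\otimes\mathcal{L}=\mathcal{F}(\mathbb{D})\otimes\omega_{\mathcal{X}}$, hence $\mathcal{H}om(\mathcal{F}\otimes\mathcal{L},\mathcal{F}\otimes\omega_{\mathcal{X}})\cong\mathcal{H}om(\mathcal{F}(\mathbb{D}),\mathcal{F})$, this is exactly the sequence~\eqref{eq501} with $\mathfrak{n}=\prod_{p\in D}\mathfrak{l}_p\otimes\mathcal{O}_p$. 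Condition (2) is then clear: each Levi subalgebra $\mathfrak{l}_p$ is a Lie algebra, so $\mathfrak{n}$ is a sheaf of Lie algebras supported set-theoretically on $\mathbb{D}$. Condition (1) holds because $\bar D$ is nonempty and $\mathfrak{l}_p\neq 0$, so $\mathfrak{n}\neq 0$ and the inclusion $\mathcal{H}om(\mathcal{F}(\mathbb{D}),\mathcal{F})\hookrightarrow\mathcal{E}nd(\mathcal{F})$ fails to be surjective; this last point is exactly what keeps us out of the degenerate symplectic case $\mathcal{L}\cong\omega_{\mathcal{X}}$ described in Remark~\ref{502} and treated in Theorem~\ref{404}.

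The routine parts are the sheaf-theoretic identifications $\mathcal{H}om(\mathcal{F}(\mathbb{D}),\mathcal{F})\cong\mathcal{SP}ar\mathcal{E}nd(F)$ and $\mathcal{E}nd(\mathcal{F})\cong\mathcal{P}ar\mathcal{E}nd(F)$ recorded above, together with the vanishing of $H^1$ of a sheaf concentrated on points (needed so that~\eqref{eq502} is short exact) and the Grothendieck--Serre duality computation that makes the dual of the Lie algebroid equal to $\mathcal{M}^{par,0}_H(X,\alpha)$. The one point that genuinely requires attention is the non-surjectivity hypothesis (1): one must be sure the parabolic data is ``genuine'' in the sense that $\mathfrak{n}\neq0$, which is where the assumption $\bar D\neq\varnothing$ enters. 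Finally, to cover weights that are not rational one can run the argument of Theorem~\ref{501} directly on $(X,\bar D)$ using~\eqref{eq601} in place of its stacky avatar, since nothing in that argument used the stacky model beyond the existence of the Atiyah-type sequence; the Poisson structure so obtained on the locus with stable underlying parabolic bundle then extends to all of $\mathcal{M}^{par}_H(X,\alpha)$ by density, as in the proof of Theorem~\ref{404}.
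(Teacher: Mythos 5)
Your proposal is correct and follows essentially the same route as the paper: identify $\mathcal{M}^{par}_H(X,\alpha)$ with $\mathcal{M}_H(\mathcal{X},\omega_{\mathcal{X}}(\mathbb{D}),\alpha)$ via Proposition \ref{602}, and then check that the sequence \eqref{eq602}, as the translation of \eqref{eq601}, satisfies the two hypotheses of Theorem \ref{501} with $\mathfrak{n}=\prod_{p\in D}\mathfrak{l}_p\otimes\mathcal{O}_p$. Your explicit verification of conditions (1) and (2) and the remark about non-surjectivity versus the degenerate case of Remark \ref{502} simply spell out what the paper leaves as ``automatically satisfied.''
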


\begin{proof}
	Given the data $(X,\bar{D},\alpha)$, we can construct a root stack $\mathcal{X}=X_{\bar{D},\bar{r}}$ (see \S 2.4 and \S 2.5). Denote by $\mathbb{D}$ the corresponding divisor on $\mathcal{X}$. There is a one-to-one correspondence between parabolic Higgs bundles with parabolic structure $\alpha$ and Higgs bundles with parabolic structure $\alpha$ on $\mathcal{X}$. Under this correspondence, the line bundle $\omega_X(\bar{D})$ on $X$ corresponds to $\omega_{\mathcal{X}}(\mathbb{D})$ on $\mathcal{X}$. By Proposition \ref{602}, this induces an isomorphism between $\mathcal{M}_H^{par}(X,\alpha)$ and $\mathcal{M}_H(\mathcal{X},\omega_{\mathcal{X}}(\mathbb{D}),\alpha)$. Therefore, it is enough to prove that the moduli space $\mathcal{M}_H(\mathcal{X},\omega_{\mathcal{X}}(\mathbb{D}),\alpha)$ has a natural Poisson structure. When $\mathcal{L}=\omega_{\mathcal{X}}(\mathbb{D})$, the condition in Theorem \ref{501} is automatically satisfied. This finishes the proof of the corollary.
\end{proof}

\vspace{2mm}
\textbf{Acknowledgments}.
The authors wish to warmly thank Philip Boalch and Florent Schaffhauser for helpful discussions and particularly useful comments. The authors also thank the anonymous referees for a careful reading of the manuscript and important remarks which led to various improvements. G. K. and H. S. are very grateful to Athanase Papadopoulos, Weixu Su and the ``Programme de Recherche conjoint (PRC) CNRS/NNSF of China 2018" for support for their visit to Fudan University, where part of this work was completed. L. Z. thanks the Institut de Recherche Math\'{e}matique Avanc\'{e}e of the Universit\'{e} de Strasbourg for its hospitality. G. K. is grateful to the Labex IRMIA of the Universit\'{e} de Strasbourg for support during the completion of this project. H. S. is supported by National Key R$\&$D Program of China No. 2022YFA1006600 and NSFC12101243.
\vspace{2mm}

\bigskip
\noindent\small{\textsc{Department of Mathematics, University of Patras}\\ Panepistimioupolis Patron, Patras 26504, Greece
	}\\
\emph{E-mail address}:  \texttt{gkydonakis@math.upatras.gr}

\bigskip
\noindent\small{\textsc{Department of Mathematics, South China University of Technology}\\
	381 Wushan Rd, Tianhe Qu, Guangzhou, Guangdong, China}\\
\emph{E-mail address}:  \texttt{hsun71275@scut.edu.cn}

\bigskip
\noindent\small{\textsc{Department of Mathematics, University of Maryland, College Park}\\
	4176 Campus Drive - William E. Kirwan Hall,
	College Park, MD 20742-4015, USA}\\
\emph{E-mail address}: \texttt{ltzhao@umd.edu}

\end{document}